\newtheorem{thm}{Theorem}[section]
\newtheorem{prop}[thm]{Proposition}
\newtheorem{lem}[thm]{Lemma}
\newtheorem{cor}[thm]{Corollary}
\newtheorem{qn}[thm]{Question}
\numberwithin{equation}{section}
\theoremstyle{definition}
\newtheorem{remark}[thm]{Remark}
\newcommand{\Db}{{\rm D}^{\rm b}}
\newcommand{\Br}{{\rm Br}}
\newcommand{\NS}{{\rm NS}}
\newcommand{\Pic}{{\rm Pic}}
\newcommand{\ch}{{\rm ch}}
\newcommand{\rk}{{\rm rk}}
\newcommand{\coh}{{\cat{Coh}}}
\newcommand{\Hom}{{\rm Hom}}
\renewcommand{\Re}{{\rm Re}}
\renewcommand{\Im}{{\rm Im}}
\newcommand{\co}{{\mathrm{H}}}
\newcommand{\id}{{\rm id}}
\newcommand{\comp}{\circ}
\newcommand{\dual}{^{\vee}}
\newcommand{\mono}{\hookrightarrow}
\newcommand{\epi}{\twoheadrightarrow}
\newcommand{\mor}[1][]{\xrightarrow{#1}}
\newcommand{\isomor}{\mor[\sim]}
\newcommand{\cat}[1]{\begin{bf}#1\end{bf}}
\newcommand{\HH}{\mathbb{H}}
\newcommand{\cal}{\mathcal}
\newcommand{\ka}{{\cal A}}
\newcommand{\kb}{{\cal B}}
\newcommand{\kc}{{\cal C}}
\newcommand{\kd}{{\cal D}}
\newcommand{\ke}{{\cal E}}
\newcommand{\kf}{{\cal F}}
\newcommand{\kg}{{\cal G}}
\newcommand{\kh}{{\cal H}}
\newcommand{\kk}{{\cal K}}
\newcommand{\kj}{{\cal J}}
\newcommand{\kl}{{\cal L}}
\newcommand{\ko}{{\cal O}}
\newcommand{\kp}{{\cal P}}
\newcommand{\ks}{{\cal S}}
\newcommand{\ky}{{\cal Y}}
\newcommand{\ZZ}{\mathbb{Z}}
\newcommand{\QQ}{\mathbb{Q}}
\newcommand{\RR}{\mathbb{R}}
\newcommand{\CC}{\mathbb{C}}
\newcommand{\PP}{\mathbb{P}}
\newcommand{\h}{l}
\newcommand{\e}{\ke_0}
\newcommand{\f}{\ke_1}
\newcommand{\mc}[1]{{\cal #1}}
\begin{document}

\title{Fano varieties of cubic fourfolds containing a plane}

\author[E.\ Macr\`i and P.\ Stellari]{Emanuele Macr\`i and Paolo Stellari}

\address{E.M.: Department of Mathematics, University of Utah, 155 South 1400 East, Salt Lake City, UT 84112, USA \& Mathematical Institute, University of Bonn, Endenicher Allee 60, D-53115 Bonn, Germany}
\curraddr{Department of Mathematics, The Ohio State University, 231 W 18th Avenue, Columbus, OH 43210, USA}
\email{macri.6@math.osu.edu}

\address{P.S.: Dipartimento di Matematica ``F. Enriques'',
Universit{\`a} degli Studi di Milano, Via Cesare Saldini 50, 20133
Milano, Italy} \email{paolo.stellari@unimi.it}

\keywords{Derived categories, cubic fourfolds}

\subjclass[2010]{18E30, 14E08}

\begin{abstract}
We prove that the Fano variety of lines of a generic cubic fourfold containing a plane is isomorphic to a moduli space of twisted stable complexes on a K3 surface. On the other hand, we show that the Fano varieties are always birational to moduli spaces of twisted stable coherent sheaves on a K3 surface. The moduli spaces of complexes and of sheaves are related by wall-crossing in the derived category of twisted sheaves on the corresponding K3 surface.
\end{abstract}

\maketitle

\section{Introduction}\label{sec:intro}

In this paper we investigate the geometry of the Fano variety of lines on a cubic fourfold containing a plane and its relation to moduli spaces of sheaves on K3 surfaces.
Our approach is based on techniques arising from recent work by Kuznetsov on semiorthogonal decompositions of the derived category of coherent sheaves.

\smallskip

A \emph{cubic fourfold} is a smooth complex hypersurface of degree $3$ in $\PP^5$.
The moduli space $\kc$ of cubic fourfolds is a quasi-projective variety of dimension $20$ (\cite{H2}).
A way to produce divisors in $\kc$ is to consider \emph{special} cubic fourfolds, namely cubic fourfolds containing an algebraic surface not homologous to a complete intersection. More precisely, a cubic fourfold $Y$ is special if $\co^4(Y,\ZZ)$ contains a primitive rank-$2$ sublattice $L$ generated by $H^2$, the self-intersection of the hyperplane section, and an algebraic surface $T$. If $d$ is a positive integer, $Y$ is contained in the divisor $\kc_d\subseteq\kc$ if there exists $L$ as above and the matrix representing the intersection form on $L$ has determinant $d$. Under some assumptions on $d$, $\kc_d$ is non-empty and irreducible (see \cite{H2}). The basic example we deal with in this paper is $\kc_8$, the divisor parametrizing cubic fourfolds containing a plane (\cite{H2,V}).

\smallskip

The \emph{Fano variety of lines} $F(Y)$ of a cubic fourfold $Y$ is the variety parametrizing the lines in $Y$.
It is a projective irreducible symplectic complex manifold of dimension $4$.
A classical result of Beauville and Donagi \cite{BD} says that $F(Y)$ is deformation equivalent to the Hilbert scheme $\mathrm{Hilb^2}(S)$ of length-$2$ $0$-dimensional subschemes of a K3 surface $S$ of degree $14$. In other words, $F(Y)$ can be deformed to a moduli space of stable sheaves on $S$.

As the moduli space of smooth projective K3 surfaces is a countable union of $19$-dimensional varieties, $F(Y)$ cannot always be isomorphic to a moduli space of stable sheaves on a K3 surface. Nevertheless, the following result was proved in \cite{H2}:

\begin{thm}\label{thm:Hassett} {\bf (Hassett)}
	Assume that $d=2(n^2+n+2)$ where $n$ is an integer $\geq 2$ and let $Y$ be a generic cubic fourfold in $\kc_d$. Then $F(Y)$ is isomorphic to the Hilbert scheme of length-$2$ $0$-dimensional subschemes of a K3 surface and so, in particular, to a moduli space of stable sheaves.
\end{thm}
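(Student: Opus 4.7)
The plan is to combine the Abel--Jacobi isometry of Beauville--Donagi, a lattice-theoretic computation that pins down the arithmetic shape of $d$, and a global Torelli argument for irreducible symplectic fourfolds. First I would recall \cite{BD}: the Abel--Jacobi map induces a Hodge isometry
\[
\alpha\colon H^4(Y,\ZZ)_{\mathrm{prim}}(-1)\isomor H^2(F(Y),\ZZ)_{\mathrm{prim}},
\]
between the opposite of the intersection form on $Y$ and the Beauville--Bogomolov form on $F(Y)$, sending the Pl\"ucker polarization $g\in H^2(F(Y),\ZZ)$ to a fixed multiple of $h^2$. This allows me to read $\NS(F(Y))$ directly off the distinguished rank-$2$ lattice $K_d\subseteq H^4(Y,\ZZ)$ attached to $Y\in\kc_d$.

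The arithmetic hypothesis $d=2(n^2+n+2)$ would then be used through a direct lattice computation: I would check that it is precisely the condition under which the rank-$2$ lattice $\NS(F(Y))$ contains a class $\dd$ with $\dd^2=-2$ and divisibility $2$ in $H^2(F(Y),\ZZ)$, and such that the orthogonal complement $\dd^\perp\subset H^2(F(Y),\ZZ)$ is abstractly isometric to the K3 lattice $\Ll_{K3}=E_8(-1)^{\oplus 2}\oplus U^{\oplus 3}$. Applying surjectivity of the period map for polarized K3 surfaces then produces a K3 surface $S$ equipped with a polarization $L$ of degree $L^2=2(n^2+n+2)$ whose Hodge structure on $H^2(S,\ZZ)$ corresponds, under $\dd^\perp\cong\Ll_{K3}$, to the sub-Hodge structure $\dd^\perp\subset H^2(F(Y),\ZZ)$. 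The standard identification $H^2(\mathrm{Hilb}^2(S),\ZZ)=H^2(S,\ZZ)\oplus\ZZ\dd$ yields a Hodge isometry $\ff\colon H^2(F(Y),\ZZ)\isomor H^2(\mathrm{Hilb}^2(S),\ZZ)$ preserving the Beauville--Bogomolov form. To promote $\ff$ to a biregular isomorphism I would invoke the global Torelli theorem of Verbitsky for irreducible holomorphic symplectic fourfolds of $K3^{[2]}$-type: it then suffices to check that $\ff$ sends an ample class on $F(Y)$ to an ample class on $\mathrm{Hilb}^2(S)$.

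The \textbf{main obstacle} is precisely this final ampleness check. One must rule out that $\ff$ sends the Pl\"ucker polarization $g$ across a $(-2)$-wall or an MBM-type wall of the K\"ahler cone of $\mathrm{Hilb}^2(S)$, since Torelli only guarantees a birational correspondence otherwise. For a \emph{generic} cubic $Y\in\kc_d$ the transcendental lattice is as large as possible, so the finitely many wall classes are controlled by the arithmetic of $K_d$ and can, in principle, be enumerated and checked against the image $\ff(g)$; this wall analysis is the technical heart of the argument. By contrast, the Abel--Jacobi isometry is classical and the lattice computation is a finite calculation depending only on the numerical identity $d=2(n^2+n+2)$.
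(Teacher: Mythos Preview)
The paper does not prove Theorem~\ref{thm:Hassett}. It is stated in the introduction as background, attributed to Hassett, and cited from \cite{H2}; the paper's own contributions are Theorems~\ref{thm:main1} and~\ref{thm:main2}, which concern the case $d=8$ explicitly excluded by Hassett's result. So there is no proof in the paper against which to compare your proposal.

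That said, a brief comment on your strategy. Your outline is a coherent \emph{modern} approach, but it is anachronistic: Hassett's paper \cite{H2} appeared in 2000, nearly a decade before Verbitsky's global Torelli theorem \cite{Ve} (which the present paper itself cites only as a recent preprint in Remark~\ref{rmk:Hassett}). Hassett's original argument does not and cannot invoke Verbitsky. Instead he works directly with period domains: he identifies the period domain for the marked special cubic fourfolds in $\kc_d$ with a period domain for lattice-polarized K3 surfaces, and then uses the Torelli theorem for K3 surfaces together with a deformation argument to produce the isomorphism $F(Y)\cong\mathrm{Hilb}^2(S)$ on a dense open set. The genericity hypothesis is what allows him to avoid exactly the wall-crossing analysis you flag as the main obstacle.

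Your approach would work in principle, and you have correctly located the difficulty in the ampleness check. One small caution: your claim that $S$ carries a polarization of degree $L^2=2(n^2+n+2)=d$ should be checked carefully against the lattice embedding; the degree of the K3 surface associated to $Y\in\kc_d$ is governed by the orthogonal complement of $\dd$ inside the rank-$2$ algebraic lattice, not by $d$ itself, and these need not coincide.
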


The case $d=8$, corresponding to cubic fourfolds $Y$ containing a plane $P$, is not covered by this result. This happens despite the fact that a very natural K3 surface is related to the geometry of $Y$. Indeed, consider the projection $Y\dashrightarrow\PP^2$ from $P$ onto a plane in $\PP^5$ disjoint from $P$. The blow up of $Y$ along $P$ yields a quadric fibration $\pi:\widetilde Y\to\PP^2$ whose fibres degenerate along a plane sextic $C\subseteq\PP^2$. Assume that $C$ is smooth (which is generically the case). The double cover $S\to\PP^2$ ramified along $C$ is a smooth projective K3 surface.

This construction provides also a natural element $\beta$ in the Brauer group of $S$ which is either trivial or of order $2$.
The geometric explanation for $\beta$ goes essentially back to \cite{V}. Indeed, the K3 surface $S$ can be thought of as the `moduli space of rulings' of the quadrics in the fibration $\pi:\widetilde Y\to\PP^2$.
Hence, this provides a $\PP^1$-fibration $F\to S$ parametrizing the lines $l\subseteq Y$ contained in the fibres of $\pi$. It turns out that $F$ is actually the Brauer--Severi variety corresponding to the twist $\beta$ (see Section \ref{subsec:twistedK3} for more details).
It is worth mentioning that, by construction, the twist $\beta$ is trivial for all rational cubic fourfolds contained in the divisors of $\kc_8$ described in \cite{Ha1}.

\smallskip

As many interesting examples of cubic fourfolds with a rich geometry are not considered in Hassett's result, one may wonder whether Theorem \ref{thm:Hassett} can be extended further. More precisely, following \cite{H2} and \cite{KMM}, one can raise the following natural question.

\begin{qn}\label{qn:main}
Are there other values of $d$ for which a (generic) cubic fourfold $Y$ has Fano variety $F(Y)$ isomorphic to a moduli space of stable (twisted) sheaves or complexes on a K3 surface?
\end{qn}

This paper may be considered as an attempt to answer this question in the case of cubic fourfolds containing a plane. The interest of cubic fourfolds of this special type is mostly related to their rich but rather mysterious geometry. For example, on one hand all new examples of rational cubic fourfolds described in \cite{Ha1} are contained in $\kc_8$ while, on the other hand, the very generic element in this divisor is expected to be non-rational.

Kuznetsov recently proposed in \cite{Kuz3} a conjectural interpretation of the rationality problem for cubic fourfolds in terms of the non-trivial part $\cat{T}_Y$ of a semi-orthogonal decomposition of the bounded derived category of coherent sheaves on $Y$ (see Section \ref{subsec:semiorthogonal} for more details about the categorical setting). Although at the moment it is not completely understood how much this approach can overcome the classical one via Hodge theory, it is certainly clear that Kuznetsov's idea sheds light on the possible use of the subcategory $\cat{T}_Y$ for geometric purposes.

For smooth cubic threefolds $Y$, this approach has been successfully investigated in \cite{BMMS} where we prove that the category $\cat{T}_Y$ characterizes uniquely the isomorphism type of $Y$. In the same paper, some further results concerning cubic fourfolds containing a plane are discussed (see \cite{BMMS}). This paper follows precisely this direction and studies further the meaning of the category $\cat{T}_Y$ in this geometric context.

\medskip

Therefore, going back to the problem of describing the geometry of the Fano variety, let $Y$ be a cubic fourfold containing a plane $P$. The fact that, in general, $F(Y)$ is not isomorphic (and not even birational) to a smooth projective moduli space of untwisted sheaves can be easily proved, even for $Y$ containing a plane (see, for example, Proposition \ref{prop:nospmod}). Nevertheless, remembering the definition of the K3 surface $S$ above and passing to $\beta$-twisted sheaves and complexes (see Section \ref{subsec:twistedK3} for the precise definitions), we can state the first main result of the paper.

\begin{thm}\label{thm:main1}
If $Y$ is a generic cubic fourfold containing a plane, then $F(Y)$ is isomorphic to a moduli space of stable objects in the derived category $\Db(S,\beta)$ of bounded complexes of $\beta$-twisted coherent sheaves on $S$.
\end{thm}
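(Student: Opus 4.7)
The plan is to realise $F(Y)$ as a Bridgeland moduli space on $\Db(S,\beta)$ by transporting lines through Kuznetsov's equivalence between the ``K3 subcategory'' of $\Db(Y)$ and $\Db(S,\beta)$, and then selecting the correct chamber in the stability manifold by wall-crossing. The abstract already signals that the naive moduli space of $\beta$-twisted sheaves (the one corresponding to the Brauer--Severi picture $F\to S$) is only birational to $F(Y)$, so a genuine isomorphism forces the choice of a non-Gieseker stability condition.

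First I would recall Kuznetsov's semi-orthogonal decomposition $\Db(Y)=\langle \ka_Y,\ko_Y,\ko_Y(1),\ko_Y(2)\rangle$ and the Fourier--Mukai equivalence $\Ff\colon \ka_Y\isomor\Db(S,\beta)$ valid under the genericity assumption of the theorem. To each line $l\subseteq Y$ I would associate the object $E_l\in\ka_Y$ obtained by projecting the twisted ideal sheaf $I_l(1)$ along the semi-orthogonal decomposition. A Hirzebruch--Riemann--Roch computation produces a fixed primitive Mukai vector $v:=v(\Ff(E_l))\in\Htil(S,\beta,\ZZ)$, independent of $l$, and hence, for any stability condition $\sigma$ on $\Db(S,\beta)$ for which every $\Ff(E_l)$ is $\sigma$-stable, a classifying morphism $\eta\colon F(Y)\to M_\sigma(v)$.

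The heart of the proof is to produce such a $\sigma$. Using tilt-theoretic constructions of Bridgeland stability conditions on $\Db(S,\beta)$, I would exhibit a chamber of $\Stab(\Db(S,\beta))$ whose stability conditions make all $\Ff(E_l)$ simultaneously stable; the genericity of $Y$ forces $\NS(S)$ to have rank one, which tightly constrains the numerical walls. Bayer--Macr\`i-type results then guarantee that $M_\sigma(v)$ is a smooth projective irreducible holomorphic symplectic fourfold. The morphism $\eta$ is proper, and it is injective because $I_l(1)$ determines $l$ and distinct lines yield non-isomorphic projections in $\ka_Y$ (a residual $\ko_Y(i)$-summand cannot account for the difference). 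An injective proper morphism between smooth projective irreducible varieties of the same dimension is an isomorphism, which concludes the argument.

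The main obstacle is the construction of $\sigma$: producing one stability condition making \emph{all} of the $\Ff(E_l)$ stable at once. Lines contained in the plane $P$ and lines lying in the singular fibres of the quadric fibration $\pp\colon \widetilde Y\to\PP^2$ behave numerically differently from the generic ones, and these are precisely the loci along which the $\beta$-twisted Gieseker moduli space fails to be isomorphic to $F(Y)$. Classifying the potential destabilising subobjects in tilt stability and identifying a chamber across the relevant walls is the technical heart of the argument, and reflects the dichotomy announced in the abstract between the complex- and sheaf-theoretic versions of the moduli space.
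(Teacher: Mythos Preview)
Your overall architecture is correct and matches the paper's: transport lines through Kuznetsov's equivalence to objects $\kj_l\in\Db(S,\beta)$, locate a Bridgeland chamber where all $\kj_l$ are stable, and identify $F(Y)$ with the resulting moduli space. Two points deserve correction, and the paper's proof differs from yours in how it handles the hard step.

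First, your claim that ``genericity of $Y$ forces $\NS(S)$ to have rank one'' is not what the paper means by generic. Here \emph{generic} is condition $(\ast\ast)$: the sextic $C$ is smooth and $\beta\neq 0$. This holds on the complement of countably many divisors in $\kc_8$, but it does \emph{not} force $\rk\NS(S)=1$; the argument must (and in the paper does) work for arbitrary $\Pic(S)$ subject only to $(\ast\ast)$. The paper never uses a Picard-rank hypothesis to constrain walls; instead it uses that $\beta$ has order $2$, so by Lemma~\ref{lem:NS} every class in $\Pic(S,B)$ has even rank, which is what bounds destabilisers.

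Second, your diagnosis of the problematic lines is slightly off. Lines in singular fibres of $\pi$ are not special: by Proposition~\ref{prop:ideallines} the image $\kj_l$ is a pure dimension-one sheaf for \emph{every} line $l\not\subseteq P$. The only lines whose images are honest complexes are those contained in $P$, and for these $\kj_l$ sits in a triangle $\e[1]\to\kj_l\to\f$ with $\e,\f$ the spherical twisted bundles corresponding to $\kb_0,\kb_1$.

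This is where the paper is much more concrete than your sketch. Rather than ``classifying potential destabilising subobjects'', the paper writes down an explicit one-parameter family $\sigma_m=(Z_m,\mathbf{A})$ and shows there is a single relevant wall, at $m_0=\tfrac{\sqrt{5}}{4}$, where $\e[1]$ and $\f$ acquire the same phase. For $m>m_0$ the sheaves $\kk_l$ (equal to $\kj_l$ if $l\not\subseteq P$, and to $\coker(\e\to\f)$ if $l\subseteq P$) are stable and the moduli space is the twisted-sheaf space $M(S,v,\beta)$; for $m\in(\epsilon,m_0)$ the complexes $\kj_l$ themselves become stable for \emph{all} $l$. The identification of destabilisers in Lemma~\ref{lem:st1} uses only the Hodge index theorem and the even-rank constraint above, not a rank-one hypothesis.

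Finally, your endgame differs from the paper's. You propose to build a classifying map $\eta\colon F(Y)\to M_\sigma(v)$ and argue it is a proper injection between smooth projective irreducible fourfolds, hence an isomorphism. The paper instead proves directly that the $\kj_l$ exhaust the $\sigma_m$-stable objects of Mukai vector $v$: any such object is $\sigma_{m_0}$-semistable, and either survives to $m>m_0$ (hence lies in the already-identified sheaf moduli $M(S,v,\beta)$), or has Jordan--H\"older factors $\e[1],\f$ at $m_0$, forcing it to be some $\kj_l$ with $l\subseteq P$. Your route would also work (given the later Bayer--Macr\`i machinery for projectivity and irreducibility of $M_\sigma(v)$), but the paper's is self-contained and exploits the explicit wall-crossing it has just set up.
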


In the statement (an later on), a cubic fourfold $Y$ with a plane is meant to be \emph{generic} if it is in the complement of a countable union of codimension-$1$ subvarieties of $\kc_8$. Examples of cubic fourfolds for which Theorem \ref{thm:main1} holds are those that in Section \ref{subsec:bir1} we call \emph{very generic}. Roughly speaking, a cubic fourfold $Y$ with a plane is very generic if the algebraic part of $\co^4(Y,\ZZ)$ is the smallest possible. As these cubic fourfolds are dense in $\kc_8$, our result can be regarded as the analogue of Theorem \ref{thm:Hassett}.

The proof of Theorem \ref{thm:main1} will be carried out in Section \ref{sec:iso}. We will show that the result could be stated in a more precise but less compact way. Indeed, the Mukai vector of the complexes parametrized by the moduli space mentioned in the statement above and the Bridgeland's stability conditions for which they are stable can be explicitly described.

\medskip

The other direction we follow in the paper consists in weakening Question \ref{qn:main}. Indeed, one can wonder if the Fano variety of lines of a cubic fourfold is birational to a smooth projective moduli space of twisted sheaves on a K3 surface. The same dimension counting as at the beginning of the introduction shows that this cannot be true for all cubic fourfolds in $\kc$.

One geometric condition that may give a hope for a positive answer to the previous question is the presence of a K3 surface associated to the geometry of the fourfold. This guess is confirmed by the second main result of the paper.

\begin{thm}\label{thm:main2}
For all cubic fourfolds $Y$ containing a plane, the Fano variety $F(Y)$ is birational to a smooth projective moduli space of twisted sheaves on a K3 surface. Moreover, if $Y$ is very generic, then such a birational map is either an isomorphism or a Mukai flop.
\end{thm}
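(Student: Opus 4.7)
\medskip

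\noindent\textbf{Proof plan.} The strategy is to relate the moduli space of Bridgeland stable complexes produced by Theorem \ref{thm:main1} to a classical Gieseker moduli space of twisted sheaves through wall-crossing in $\Db(S,\beta)$, and then extend the result to non-generic cubic fourfolds by a deformation argument.

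For a generic cubic fourfold $Y\in\kc_8$ (with $C$ smooth and $\beta$ non-trivial), Theorem \ref{thm:main1} provides an isomorphism $F(Y)\iso M_\sigma(v)$, where $v$ is a primitive Mukai vector in $\Htil(S,\beta,\ZZ)$ with $v^2>0$ and $\sigma$ is a Bridgeland stability condition on $\Db(S,\beta)$. Fix a $\beta$-twisted polarisation $H$ on $S$. I would then join $\sigma$ to a point $\sigma'$ lying in a Gieseker chamber, namely one in which $\sigma'$-stability of objects with Mukai vector $v$ coincides with $H$-Gieseker stability of $\beta$-twisted sheaves, by a generic continuous path $\gamma\colon[0,1]\to\Stab(S,\beta)$. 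Such a path meets only finitely many walls; at each wall the moduli space is either left unchanged or undergoes a birational modification. The endpoint yields a fine moduli space $M_H(v)$ of Gieseker stable $\beta$-twisted sheaves on $S$, and the composition of the wall-crossing transformations produces a birational isomorphism $F(Y)\dashrightarrow M_H(v)$.

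Next, I would analyse the nature of each birational modification. Since $v$ is primitive and $v^2>0$, both $M_\sigma(v)$ and $M_H(v)$ are smooth irreducible holomorphic symplectic fourfolds. A lattice-theoretic enumeration of the potential destabilising subobjects along $\gamma$ (i.e.\ of the spherical and isotropic classes in $\Htil(S,\beta,\ZZ)$ whose Mukai pairing with $v$ is admissible at a wall) should exclude divisorial walls, leaving only flopping and fake walls. Each flopping wall induces a Mukai flop on the corresponding moduli space; hence $F(Y)\dashrightarrow M_H(v)$ factors as a sequence of Mukai flops and, if no genuine wall is crossed, is an actual isomorphism.

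To handle a non-generic $Y\in\kc_8$, I would use a deformation argument. The Fano variety $F(Y)$ varies smoothly over $\kc_8$, and the pair $(S,\beta)$, together with the relative Gieseker moduli space of twisted sheaves with Mukai vector $v$, also deforms in families, possibly with $S$ acquiring rational double points when $C$ is singular. Since birational equivalence of smooth irreducible holomorphic symplectic manifolds is preserved under smooth deformation, the birational isomorphism established at the generic fibre extends to $F(Y)\dashrightarrow M_H(v)$ for every $Y\in\kc_8$; the refined Mukai flop statement persists whenever $C$ remains smooth, since this guarantees that $S$ and the moduli spaces stay smooth and holomorphic symplectic. The main obstacle is the wall-crossing classification in the second step: ruling out divisorial walls along $\gamma$ requires an explicit and delicate computation inside the twisted Mukai lattice $\Htil(S,\beta,\ZZ)$ tailored to the specific Mukai vector $v$ arising from the plane $P\subset Y$.
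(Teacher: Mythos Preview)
Your proposal has a structural problem: in this paper, Theorem~\ref{thm:main1} is proved \emph{after} Theorem~\ref{thm:main2}, and its proof (Section~\ref{subsec:proofthm2}) explicitly relies on the identification of the Gieseker moduli space $M(S,v,\beta)$ with the sheaves $\kk_l$ established in Section~\ref{sec:stab}. Concretely, to show that every $\sigma_m$-semistable object with Mukai vector $v$ is some $\kj_l$, the paper pushes the object to the large-volume region, lands in $M(S,v,\beta)$, and then invokes the description of that moduli space coming from Proposition~\ref{prop:bir1}. So taking Theorem~\ref{thm:main1} as input to prove Theorem~\ref{thm:main2} is circular as the paper stands.

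The paper's actual argument for the case $C$ smooth (Proposition~\ref{prop:bir1}) is far more direct than wall-crossing: the equivalence $\Theta:\cat{T}_Y\to\Db(S,\beta)$ sends $\kf_l$ with $l\not\subseteq P$ to a pure dimension-$1$ sheaf $\kj_l$ of primitive Mukai vector, hence automatically Gieseker stable. This already gives an isomorphism from the open set $F(Y)\setminus P\dual$ onto an open subset of $M(S,v,\beta)$, i.e.\ a birational map whose indeterminacy locus is the single plane $P\dual$. The dichotomy ``isomorphism or a Mukai flop'' then follows from Huybrechts' results on birational hyperk\"ahler manifolds (\cite[Prop.~2.1, Cor.~2.2]{Huy1}): either the map extends across $P\dual$, or one performs the Mukai flop in $P\dual$ and checks via intersection numbers that the resulting map does extend. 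Your route via a wall-crossing path would instead produce a \emph{sequence} of Mukai flops, and you would still owe an argument that at most one genuine flopping wall is hit; the paper sidesteps this entirely by exhibiting the exceptional locus geometrically.

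For the general $Y\in\kc_8$ (Section~\ref{subsec:bir2}) the paper does use a deformation argument, but not the bare assertion that ``birational equivalence is preserved under smooth deformation'' (which, as stated, is not obviously true). Rather, it places $F(Y)$ and $M(S',v,\beta')$ as two points in the moduli space $\km$ of marked hyperk\"ahler manifolds with the same period, and then uses Huybrechts' non-separation theorem \cite[Thm.~4.7]{Huy2}: since on a dense set of very generic cubics the two are birational (hence non-separated in $\km$), they cannot be separated anywhere in the family, and non-separated points in $\km$ are birational. Your sketch would need this ingredient, or something equivalent, to be made rigorous.
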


The plane sextic $C$ is again the one presented at the beginning of the introduction while the K3 surface mentioned in the statement is a special and explicit deformation of the double cover of $\PP^2$ introduced above. Also in this case, the Mukai vector of the stable twisted sheaves can be explicitly described. Moreover, the Hodge (and lattice) structure of the second cohomology of the moduli space will be made apparent in the course of the proof explained in Sections \ref{subsec:bir1} and \ref{subsec:bir2}.

\medskip

Let us spend a few words to clarify the relation between the results above.
The Fano scheme of lines on a cubic fourfold $Y$ containing a plane $P$ gives an explicit family of objects in the derived category $\Db(S,\beta)$.
A generic object in this family is a (twisted) sheaf, but some objects are complexes (namely those corresponding to the lines contained in $P$). So, there are two ways
to deal with this family. The first one consists in finding an appropriate $t$-structure and Bridgeland's stability conditions such that the base of this family (i.e., $F(Y)$) can be considered as a moduli space of stable complexes.
The other way is to consider the moduli space of sheaves, containing the generic
object of the family. This gives a moduli space of sheaves, which by construction is
birational to $F(Y)$. Moreover, for generic $Y$ the birational transformation is shown
to be a single Mukai flop in the plane $P\dual\subseteq F(Y)$.

Notice that the moduli space of complexes in Theorem \ref{thm:main1} can be regarded as a compactification of an open subset of the moduli space of twisted stable sheaves in Theorem \ref{thm:main2} via complexes. The two compactifications are related by a wall-crossing phenomenon in the derived category. In geometric terms, as observed in Remark \ref{rmk:MukaiFlopUtah}, this means that $F(Y)$ is the Mukai flop of a moduli space of twisted sheaves, for generic cubic fourfolds containing a plane.

\smallskip

Theorem \ref{prop:Hassett} shows that the condition of $F(Y)$ being birational to a moduli space of untwisted sheaves on a K3 surface is equivalent to having an exact equivalence between $\cat{T}_Y$ and the bounded derived category of coherent sheaves on a K3 surface. Under the light of Kuznetsov's conjecture, this provides a criterion for the rationality of cubic fourfolds containing a plane in terms of the geometry of the Fano variety of lines.

\medskip

Some words about the plan of the paper are in order here. Section \ref{sec:prelim} studies the semi-orthogonal decomposition of the derived category of a cubic fourfold containing a plane defined in \cite{Kuz3}. In particular, we deal with some basic properties of the ideal sheaves of lines (and of complexes closely related to them) which will be fundamental for the paper. Contrary to the order in the exposition above, we first prove Theorem \ref{thm:main2} in Section \ref{sec:stab}. This is motivated by the fact that the argument descends directly from some easy properties studied in Section \ref{subsec:twistedK3} and which will be used to prove Theorem \ref{thm:main1} as well. In Section \ref{subsec:bir3} we characterize when the Fano variety of lines is birational to a moduli space of untwisted sheaves on a K3 surface in terms of Kuznetsov's component $\cat{T}_Y$.
Finally, in Section \ref{sec:iso}, we use Bridgeland's stability conditions to prove Theorem \ref{thm:main1}. For sake of simplicity, all derived functors will be denoted as if they were underived, e.g.\ for a morphism of varieties $f:X\to Z$, we will denote $f^*$ for the derived pull-back, $f_*$ for the derived push-forward, and so on. We work over the complex numbers.

\section{The categorical setting and the ideal sheaves of lines}\label{sec:prelim}

In this section we study the semi-orthogonal decomposition of $\Db(Y)$ given in \cite{Kuz3}, for $Y$ a cubic fourfold. When $Y$ contains a plane, some basic properties of the relevant piece of this decomposition have been analyzed in \cite{BMMS}. Here we continue this study considering some special objects in it which are used in the rest of the paper.

\subsection{The geometric setting}\label{subsec:geometry}

Suppose that $Y$ is a cubic fourfold containing a plane $P$. As explained in the introduction and following \cite{Kuz3}, consider the diagram
\[
\xymatrix{D\ar[r]\ar[d]&\widetilde{Y}:={\rm Bl}_{P}Y\ar^{\sigma}[d]\ar[r]^{\;\;\;\;\;\;\;\;\pi}&\PP^2&\\
P\ar[r]&Y\subseteq\PP^5,
}
\]
where $\sigma:\widetilde{Y}\to Y$ is the blow-up of $Y$ along $P$, $D$ is the exceptional divisor, and $\pi:\widetilde{Y}\to\PP^2$ is the quadric fibration induced by the projection from $P$ onto a plane.

Denote by  $\widetilde{\PP^5}$ the blow-up of $\PP^5$ along $P$, by $h$ a hyperplane in $\PP^2$, and by $H$ (with a little abuse of notation) both a hyperplane in $\PP^5$ and its pull-backs to $\widetilde{Y}$ and to $\widetilde{\PP^5}$.
If $q:\widetilde{\PP^5}\to\PP^2$ is the induced projection from $P$, then the calculation in \cite[Lemma 4.1]{Kuz3} yields $\ko_{\widetilde{Y}}(D)\cong\ko_{\widetilde{Y}}(H-h)$ and $\widetilde{\PP}^5 \cong \PP_{\PP^2} (\ko_{\PP^2}^{\oplus 3} \oplus \ko_{\PP^2}(-h))$. Moreover, the relative ample line bundle is $\ko_{\widetilde{\PP}^5}(H)$, and the relative canonical bundle is $\ko_{\widetilde{\PP}^5}(h-4H)$.

As in \cite[Sect.\ 3]{Kuz2}, to the quadric fibration $\pi$, one can associate a sheaf $\kb_0$ (resp.\ $\kb_1$)  of even (resp.\ odd) parts of the Clifford algebra. Explicitly, as sheaves of $\ko_{\PP^2}$-modules, we have
\begin{align*}
\kb_0&\cong\ko_{\PP^2}\oplus\ko_{\PP^2}(-h)^{\oplus 3}\oplus\ko_{\PP^2}(-2h)^{\oplus 3}\oplus\ko_{\PP^2}(-3h)\\
\kb_1&\cong\ko_{\PP^2}^{\oplus 3}\oplus\ko_{\PP^2}(-h)^{\oplus 2}\oplus\ko_{\PP^2}(-2h)^{\oplus 3},
\end{align*}
where $h$ denotes, by abuse of notation, both the class of a line in $\PP^2$ and its pull-back via $\pi$.
Denote by $\coh(\PP^2,\kb_0)$ the abelian category of right coherent $\kb_0$-modules and by $\Db(\PP^2,\kb_0)$ its bounded derived category. As observed in \cite{Kuz3}, the Serre functor of $\Db(\PP^2,\kb_0)$ is the shift by $2$. A category with this property is sometimes called \emph{$2$-Calabi--Yau category}.

\begin{remark}\label{rmk:spherical}
Notice that $\kb_i$ is spherical in the category $\Db(\PP^2,\kb_0)$, for $i=0,1$. Recall that an object $\ka$ in a $\CC$-linear $2$-Calabi--Yau category $\cat{C}$ is \emph{spherical} if $\Hom_{\cat{C}}(\ka,\ka)\cong\Hom_{\cat{C}}(\ka,\ka[2])\cong\CC$ and $\Hom_{\cat{C}}(\ka,\ka[j])=0$, when $j\neq 0,2$.

To prove the claim observe that, by adjunction, $\Hom(\kb_0,\kb_0[j])\cong \co^j(\PP^2,\ko_{\PP^2}\oplus\ko_{\PP^2}(-h)^{\oplus 3}\oplus\ko_{\PP^2}(-2h)^{\oplus 3}\oplus\ko_{\PP^2}(-3h))$ and the functor $(-)\otimes_{\kb_0}\kb_1$ is an autoequivalence of $\coh(\PP^2,\kb_0)$ sending $\kb_0$ to $\kb_1$ (see \cite[Cor.\ 3.9]{Kuz2}).
\end{remark}

\medskip

Let $Y$ be a cubic fourfolds containing a plane $P$ and denote by $C$ the sextic curve in $\PP^2$ over which the fibres of $\pi$ degenerate. If $C$ is smooth, then the double cover of $\PP^2$  ramified along $C$ $$f:S\longrightarrow\PP^2$$ is a smooth K3 surface. Furthermore, by \cite[Prop.\ 3.13]{Kuz2}, there exists an Azumaya algebra $\ka_0$ on the K3 surface $S$, such that
\begin{equation}\label{eqn:Azum}
f_*\ka_0=\kb_0\qquad\mbox\qquad f_*:\coh(S,\ka_0)\isomor\coh(\PP^2,\kb_0).
\end{equation}
(See, for example, \cite{Milne} for the basic properties of Azumaya algebras, but also \cite[Chapter 1]{C}.)

\begin{remark}\label{rmk:stupid2}
An easy observation shows that the fact that $C$ is smooth coincides with the assumption in \cite{Kuz3}. More precisely, the cubic fourfold $Y$ with a plane $P$ has smooth plane sextic $C$ if and only if the fibres of $\pi:\widetilde{Y}\to\PP^2$ have at most one singular point.	Indeed, by \cite[Prop.\ 1.2]{Bea}, the curve $C$ can have at most ordinary double points. This is because we are assuming that $Y$ (and thus $\widetilde{Y}$) is smooth. Moreover a fibre of $\pi$ is union of two distinct planes if and only if it is projected to a singular point of $C$.
\end{remark}

Denote by $h$ a hyperplane in $\PP^2$. We will often need to restrict ourselves to the case where $Y$ satisfies the following additional requirement:
\begin{itemize}
\item[$(\ast)$] The cubic fourfold $Y$ contains a plane $P$, the sextic $C$ is smooth, and $f^*h$ is indecomposable.
\end{itemize}
The divisor $f^*h$ is \emph{idecomposable} if $\ko_S(f^*h)$ is not linearly equivalent to $\ko_S(C_1+C_2)$, where $C_i$ is an effective (non-trivial) divisor on $S$, for $i=1,2$.

\begin{remark}\label{rmk:stupid3}
The easiest case in which $f^*h$ is indecomposable is when $S$ is generic, i.e., $\Pic(S)=\ZZ\, f^*h$. These K3 surfaces are generic in the moduli space of degree-$2$ K3 surfaces and they are the main examples to have in mind throughout this paper.
\end{remark}

\subsection{The semi-orthogonal decomposition}\label{subsec:semiorthogonal}

Let $Y$ be a cubic fourfold and let $\Db(Y)$ be its bounded derived category of coherent sheaves.
Define $\ko_Y(iH):=\ko_{\PP^5}(iH)|_{Y}$ and
\begin{equation*}
\begin{split}
\cat{T}_Y&:=\langle\ko_Y,\ko_Y(H),\ko_Y(2H)\rangle^\perp\\
&=\left\{\ka\in\Db(Y):\Hom_{\Db(Y)}(\ko_Y(iH),\ka[p])=0,\text{ for all }p\text{ and }i=0,1,2\right\}.
\end{split}
\end{equation*}
Since the collection $\{\ko_Y,\ko_Y(H),\ko_Y(2H)\}$ is exceptional in $\Db(Y)$, we have a semi-orthogonal decomposition of $\Db(Y)$ as
\[
\Db(Y)=\langle\cat{T}_Y,\ko_Y,\ko_Y(H),\ko_Y(2H)\rangle.
\]
(For details about semi-orthogonal decompositions and exceptional collections, see, for example, \cite[Sect.\ 2.1]{Kuz3}.)

By \cite[Sect.\ 4]{Kuz2}, we can define a fully faithful functor $\Phi:=\Phi_{\ke'}:\Db(\PP^2,\kb_0)\to\Db(\widetilde{Y})$, $$\Phi_{\ke'}(\ka):=\pi^*\ka\otimes_{\pi^*\kb_0}\ke',$$ for all $\ka\in\Db(\PP^2,\kb_0)$, where $\ke'\in\coh(\widetilde{Y})$ is a rank-$4$ vector bundle on $\widetilde{Y}$ with a natural structure of flat left $\pi^*\kb_0$-module.
We will not need the actual definition of $\ke'$ (for which the reader is referred to \cite[Sect.\ 4]{Kuz2}) but only the presentation
\[
0\to q^*\kb_0(-2H)\to q^*\kb_1(-H)\to\alpha_*\ke'\to0,
\]
where $\alpha:\widetilde{Y}\mono\widetilde{\PP^5}$ is the natural embedding and $q:\widetilde{\PP^5}\to\PP^2$ is the induced projection as in Section \ref{subsec:geometry}.

\medskip

The results of \cite{Kuz2} and \cite{Orlov} give, respectively, two semi-orthogonal decompositions:
\begin{equation}\label{eqn:semiorth}
\begin{array}{l}
	\Db(\widetilde Y)=\langle\Phi(\Db(\PP^2,\kb_0)),\ko_{\widetilde Y}(-h),\ko_{\widetilde Y},\ko_{\widetilde Y}(h),\ko_{\widetilde Y}(H),\ko_{\widetilde Y}(h+H),\ko_{\widetilde Y}(2h+H)\rangle\\
\Db(\widetilde Y)=\langle\sigma^*(\cat{T}_Y),\ko_{\widetilde Y},\ko_{\widetilde Y}(H),\ko_{\widetilde Y}(2H),i_*\ko_{D},i_*\ko_{D}(H),i_*\ko_{D}(2H)\rangle.	
\end{array}
\end{equation}
Theorem 4.3 in \cite{Kuz3} yields an equivalence $$\Db(\PP^2,\kb_0)\cong\cat{T}_Y.$$ The way this is achieved is performing a sequence of mutations which allow Kuznetsov to compare the two semi-orthogonal decompositions in \eqref{eqn:semiorth}. The details will not be needed in the rest of this paper but we just recall the two fundamental mutations in the construction of the equivalence:
\begin{itemize}
	\item[(A)] The right mutation of $\Phi(\Db(\PP^2,\kb_0))$ through $\ko_{\widetilde Y}(-h)$ leading to the semi-orthogonal decomposition
	\[
	\qquad\Db(\widetilde Y)=\langle\ko_{\widetilde Y}(-h),\Phi'(\Db(\PP^2,\kb_0)),\ko_{\widetilde Y},\ko_{\widetilde Y},\ko_{\widetilde Y}(h),\ko_{\widetilde Y}(H),\ko_{\widetilde Y}(h+H),\ko_{\widetilde Y}(2h+H)\rangle,
	\]
	where $\Phi'$ is the functor obtained composing $\Phi$ with the right mutation. Namely,
	\[
	\Phi':=\mathsf{R}_{\ko_{\widetilde{Y}}(-h)}\circ\Phi:\Db(\PP^2,\kb_0)\longrightarrow\Db(\widetilde{Y}).
	\]
	\item[(B)] The left mutation of $\Phi'(\Db(\PP^2,\kb_0))$ through $\ko_{\widetilde Y}(h-H)$. Composing $\Phi'$ with this left mutation, yields a functor $\Phi'':=\mathsf{L}_{\ko_{\widetilde{Y}}(h-H)}\circ\Phi':\Db(\PP^2,\kb_0)\to\Db(\widetilde{Y})$.
\end{itemize}
As explained in the proof of \cite[Thm.\ 4.3]{Kuz3}, the equivalence $\Db(\PP^2,\kb_0)\cong\cat{T}_Y$ is provided by the functor $\sigma_*\circ\Phi''$. (For details and basic results about mutations, see, for example, \cite[Sect.\ 2.2]{Kuz3}.)

\medskip

Recall that the left adjoint functor of $\Phi$ is
\begin{align*}
\Psi(-)&:=\pi_*((-)\otimes\ko_{\widetilde{Y}}(h)\otimes\ke[1]),
\end{align*}
where $\ke\in\coh(\widetilde{Y})$ is another rank-$4$ vector bundle on $\widetilde{Y}$ with a natural structure of right $\pi^*\kb_0$-module (see \cite[Sect.\ 4]{Kuz2}). The main property we will need is the presentation
\begin{equation}\label{eq:defofke}
0\to q^*\kb_1(-h-2H)\to q^*\kb_0(-H)\to\alpha_*\ke\to0.
\end{equation}

The following result will be used to control the compatibilities with the mutations (A) and (B) above.

\begin{lem}\label{lem:stupid}
	{\rm (i)} For any integer $m$ and for $a=0,-1$, we have $\Psi(\ko_{\widetilde Y}(mh+aH))=0$.
	
	{\rm (ii)} We have
		\begin{align*}
		&\Psi(\ko_{\widetilde Y}(-h+H)) \cong\kb_0[1]\\
		&\Psi(\ko_{\widetilde Y}(h-2H)) \cong\kb_1[-1].
		\end{align*}
\end{lem}

\begin{proof}
	By the projection formula and the fact that $\pi = q \circ \alpha$, we have
	\begin{equation*}
	\begin{split}
	\Psi(\ko_{\widetilde{Y}}(mh+aH))&\cong\Psi(\alpha^*(\ko_{\widetilde{\PP^5}}(mh+aH)))\\
	               &\cong q_*(\ko_{\widetilde{\PP^5}}((m+1)h+aH)\otimes\alpha_*\ke[1]),
	\end{split}
	\end{equation*}
	for all $m$ and all $a$.
	Furthermore, since $\widetilde{\PP^5}\to\PP^2$ is a projective bundle and $\ko_{\widetilde{\PP^5}/\PP^2}(1)\cong\ko_{\widetilde{\PP^5}}(H)$, then $q_*(\ko_{\widetilde{\PP^5}}(bH))=0$, for $b=-1,-2,-3$.
	
	Hence, to prove (i), it is enough to unravel \eqref{eq:defofke} and use again the projection formula.
	The proof of (ii) is similar, by using relative Grothendieck-Serre duality and the fact that the relative canonical bundle is $\ko_{\widetilde{\PP}^5}(h-4H)$.
\end{proof}

\subsection{Ideal sheaves of lines and their relatives}\label{subsec:idealshlines}

Let $l$ be a line in $Y$ and let $I_l$ be its ideal sheaf. As remarked in \cite{KM}, while $I_l\not\in\cat{T}_Y$, the (stable) sheaf $\kf_l\in\coh(Y)$ defined by the short exact sequence
\begin{equation}\label{eqn:defFl}
0\to\kf_l\to\ko_{Y}^{\oplus 4}\to I_l(H)\to 0
\end{equation}
is in $\cat{T}_Y$. This means that, given the semi-orthogonal decomposition
\[
\Db(Y)=\langle\cat{T}'_Y,\ko_Y(-H),\ko_Y,\ko_Y(H)\rangle,
\]
the sheaf $\kf_l(-H)$ is in $\cat{T}'_Y$. Right-mutating $\cat{T}'_Y$ with respect to $\ko_Y(-H)$, one gets the semi-orthogonal decompositions
\[
\Db(Y)=\langle\ko_Y(-H),\cat{T}''_Y,\ko_Y,\ko_Y(H)\rangle=\langle\cat{T}''_Y,\ko_Y,\ko_Y(H),\ko_Y(2H)\rangle.
\]
In particular, the right mutation
\begin{equation}\label{eqn:defPl}
P_l:=\mathrm{cone}(ev\dual:\kf_l(-H)\to\mathrm{R}\Hom(\kf_l(-H),\ko_Y(-H))\dual\otimes\ko_Y(-H))[-1]
\end{equation}
is contained in $\cat{T}''_Y$. Being $\cat{T}_Y$ and $\cat{T}''_Y$ right orthogonals in $\Db(Y)$ of the same full subcategory, they can be naturally identified and so $P_l$ is an object of $\cat{T}_Y$.
Notice that we have an exact triangle
\[
\ko_Y(-H)[1]\longrightarrow P_l\longrightarrow I_l.
\]

\begin{remark}\label{rmk:modsp}
	Due to \cite[Prop.\ 5.5]{KM}, the Fano variety $F(Y)$ is identified with the connected component of the moduli space of stable sheaves containing the objects $\kf_l$, for any line $l$ in $Y$. Since the objects $P_l$ are obtained by tensoring $\kf_l$ and applying a right mutation (which yield an autoequivalence of $\cat{T}_Y$), $F(Y)$ can be also identified with the moduli space of the objects $P_l$.
\end{remark}

The following result which calculates the preimage
\begin{equation}\label{eq:Dan}
\kl_l:=(\sigma_*\circ\Phi'')^{-1}(P_l)[1]
\end{equation}
in $\Db(\PP^2,\kb_0)$ is particularly relevant for the proof of Theorems \ref{thm:main1} and \ref{thm:main2}.

\begin{prop}\label{prop:ideallines}
	If $l$ is not contained in $P$ then $\kl_l$ is a pure torsion sheaf supported on a line in $\PP^2$. If $l\subseteq P$, then there exists an exact triangle
	\[
	\kb_0[1]\longrightarrow\kl_l\longrightarrow\kb_1,
	\]
	where the extension map $\kb_1\to\kb_0[2]$ is uniquely determined by the line $l$. Moreover, there exists a bijective correspondence between the Fano variety $P\dual$ of lines in $P$ and the isomorphism classes of extensions $\kb_1\to\kb_0[2]$.
\end{prop}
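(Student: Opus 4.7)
The plan is to compute $\ki_l$ by unwinding the definition through the chain $P_l \mapsto \sigma^*P_l \mapsto \Psi(\sigma^*P_l)$. Starting from the defining sequence \eqref{eqn:defFl} and the mutation formula \eqref{eqn:defPl}, one first writes $P_l$ as an explicit two-term complex on $Y$ built from $\kf_l(-H)$, $\ko_Y(-H)$, and $\RHom(\kf_l(-H),\ko_Y(-H))\dual$; pulling back to $\widetilde Y$ and applying the left adjoint $\Psi$ of $\Phi$ (with a correction for the right mutation through $\ko_{\widetilde Y}(-h)$ relating $\Phi$ to $\Phi'$) lands the result in $\Db(\PP^2,\kb_0)$. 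The shift by $[1]$ in the definition of $\ki_l$ is absorbed by the shift in the resolution \eqref{eq:defofke} of $\ke$. The technical engine is Lemma \ref{lem:stupid}: every summand $\ko_{\widetilde Y}(mh + aH)$ with $a\in\{0,-1\}$ that arises (from $\ko_Y(-H)$, from the mutation step, and from $\sigma_*\ko_{\widetilde Y}\cong\ko_Y$) is annihilated by $\Psi$, leaving only a small residual complex to analyze.

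For the case $l\not\subset P$, let $\widetilde l$ denote the strict transform. If $l\cap P=\emptyset$ then $\sigma^*I_l\cong I_{\widetilde l}$ and $\pi|_{\widetilde l}$ is a linear embedding; if $l$ meets $P$ in a single point $p$ then $\sigma^{-1}(l)=\widetilde l\cup\sigma^{-1}(p)$, where $\sigma^{-1}(p)\cong\PP^1$ sits in $D$ and maps linearly to a line in $\PP^2$, so $\pi(\sigma^{-1}(l))$ is again a line. In either subcase, after the $\ko_{\widetilde Y}(mh+aH)$ summands are killed, the surviving term of $\Psi(\sigma^*P_l)[1]$ is the $\pi$-pushforward of something scheme-theoretically supported on $\sigma^{-1}(l)$, hence on a line in $\PP^2$. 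Purity and the $\kb_0$-module structure then follow from the construction, using that $\ke$ is a right $\pi^*\kb_0$-module.

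For the case $l\subset P$, now $\sigma^{-1}(l)\subset D$ is a $\PP^1$-bundle over $l$. The pullback $\sigma^*I_l$ fits into a short exact sequence separating its behaviour away from $D$ from its restriction along $D$; rewriting $\ko_{\widetilde Y}(D)\cong\ko_{\widetilde Y}(H-h)$ in terms of $h$ and $H$, applying Lemma \ref{lem:stupid} repeatedly, and tracking the cohomology sheaves of the resulting complex, one finds that the two contributions assemble into a copy of $\kb_1$ in degree $0$ and a copy of $\kb_0$ in degree $-1$, yielding the triangle $\kb_0[1]\to\ki_l\to\kb_1$. For the bijection with $P\dual$: combining the adjunction $\Psi\dashv\Phi$ with the resolution \eqref{eq:defofke} reduces the computation of $\Hom(\kb_1,\kb_0[2])$ to a line-bundle cohomology computation on $\PP^2$, yielding a $3$-dimensional vector space, so $\PP(\Hom(\kb_1,\kb_0[2]))\cong\PP^2$, matching $P\dual$ in dimension. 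The assignment $l\mapsto\ki_l$ is functorial in $l$, and non-isomorphic lines in $P$ yield non-isomorphic extensions (the associated cycles inside $D$ differ), so the induced morphism $P\dual\to\PP(\Hom(\kb_1,\kb_0[2]))$ is injective and hence an isomorphism. I expect the main obstacle to lie precisely in this second case: correctly isolating the contribution from $D$ in $\sigma^*I_l$, matching the two pieces of the output complex with $\kb_0[1]$ and $\kb_1$, and pinning down the isomorphism $\PP(\Hom(\kb_1,\kb_0[2]))\cong P\dual$ explicitly rather than merely by dimension count.
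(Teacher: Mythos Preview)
Your plan matches the paper's route. Two refinements are worth noting. First, the paper collapses everything up front: applying Lemma~\ref{lem:stupid} to \eqref{eqn:defFl}, \eqref{eqn:defPl}, and to the mutations (A), (B) shows directly that $\ki_l\cong\Psi(\sigma^*I_l)$, so one works with $I_l$ from the start rather than carrying $P_l$ or $\kf_l(-H)$ through the computation. Second, for $l\subset P$ your ``short exact sequence separating behaviour away from $D$'' is not quite the right picture, since $\sigma^{-1}(l)\subset D$ entirely; the relevant input is the exact triangle $\ko_{C'}(D)[1]\to\sigma^*\ko_l\to\ko_{C'}$ for $C':=\sigma^{-1}(l)$, coming from the derived pullback of $\ko_l$ along the blow-up (cf.\ \cite[Prop.\ 11.12]{Huy}), after which Lemma~\ref{lem:stupid} applied to $0\to\ko_D(-C')\to\ko_D\to\ko_{C'}\to 0$ together with \eqref{eq:defofke} yields $\Psi(\ko_{C'})\cong\kb_1[1]$ and $\Psi(\ko_{C'}(D))\cong\kb_0$. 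For the bijection, the paper computes $\Hom(\kb_1,\kb_0[2])\cong\Hom(\kb_0,\kb_1)\cong H^0(\PP^2,\kb_1)\cong\CC^3$ via Serre duality and the forgetful adjunction, and gets injectivity of $P^\vee\hookrightarrow\PP(\Hom(\kb_1,\kb_0[2]))$ from Remark~\ref{rmk:modsp} (distinct lines give non-isomorphic $P_l$, hence non-isomorphic $\ki_l$), which is cleaner than your appeal to ``associated cycles in $D$''.
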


\begin{proof}
Before going into the rather technical details of the proof, let us briefly outline the strategy. First observe that, as $\sigma_*\comp\Phi''$ is an equivalence onto its image, its quasi-inverse is given by its left adjoint functor $\Psi''\comp\sigma^*$, where $\Psi''$ is the left adjoint of $\Phi''$. Since $\Phi''$ is the composition of $\Phi$ with two mutations and the adjoint of these mutations are again appropriate mutations, we can conclude that $\Psi''$ is the composition $\Psi$ with mutations functors. Now Lemma \ref{lem:stupid} tells us that the latter functors do not effect the calculations.	
	
Let us now be more precise and give the details of the above discussion. First observe that $(\sigma_*\circ\Phi'')^{-1}(I_l)\cong\Psi(\sigma^*I_l)$.
Indeed, by Lemma \ref{lem:stupid}, the mutations (A) and (B) by $\ko_{\widetilde{Y}}(-h)$ and  $\ko_{\widetilde{Y}}(h-H)$ have no effect.
Applying $\sigma^*$ to \eqref{eqn:defFl} (tensored by $\ko_{\widetilde Y}(-H)$) and to \eqref{eqn:defPl} and observing that, again by Lemma \ref{lem:stupid},
$\Psi(\ko_{\widetilde Y}(-H))=0$, we have
\[
\kl_l=(\sigma_*\circ\Phi'')^{-1}(\kf_l(-H))[1]=(\sigma_*\circ\Phi'')^{-1}(I_l).
\]

The easiest case is when $l\cap P=\emptyset$. Since the rational map $Y\dashrightarrow\PP^2$ is well-defined on $l$ and map it to another line, denote by $j$ the embedding $l\mono\widetilde{Y}\mor[\pi]\PP^2$.
	Pulling back via $\sigma$ the exact sequence
	\[
	0\longrightarrow I_l\longrightarrow\ko_Y\longrightarrow\ko_l\longrightarrow 0,
	\]
	yields
	\[
	0\longrightarrow\sigma^*I_l\longrightarrow\ko_{\widetilde{Y}}\longrightarrow\sigma^*\ko_l=\ko_l\longrightarrow 0.
	\]
	By Lemma \ref{lem:stupid}, we have $\Psi(\ko_{\widetilde{Y}})=0$ and so
	\begin{equation*}
	\begin{split}
	\Psi(\sigma^*I_l)&\cong\Psi(\ko_l)[-1]\\
	    &=\pi_*(\ko_l[-1]\otimes\ko_{\widetilde{Y}}(h)\otimes\ke[1]))\\
	    &\cong j_*(j^*\ko_{\widetilde{Y}}(h)\otimes\ke |_l)\\
	    &\cong j_*(\ke |_l)\otimes\ko_{\PP^2}(h),
	\end{split}
	\end{equation*}
which is precisely the claim.

Assume now $l\cap P=\{pt\}$. A local computation shows that $\sigma^*I_l$ is a sheaf, sitting in an exact sequence
\[
0\longrightarrow\sigma^*I_l\longrightarrow\ko_{\widetilde{Y}}\longrightarrow\ko_{l\cup\gamma}\longrightarrow 0,
\]
where $\gamma:=\sigma^{-1}(pt)$ and $l$ denotes, by abuse of notation, the strict transform of $l$ inside $\widetilde{Y}$.
Now, by Lemma \ref{lem:stupid}, we have
\[
\kl_l\cong\Psi(\sigma^*I_l)\cong\Psi(\ko_{l\cup\gamma})[-1]\cong\pi_*(\ke|_{l\cup\gamma}\otimes\ko_{\widetilde{Y}}(h)).
\]
By using the exact sequence
\[
0\longrightarrow\ko_{\gamma}(-h)\longrightarrow\ko_{l\cup\gamma}\longrightarrow\ko_l\longrightarrow 0,
\]
and observing that $\ko_{\widetilde{Y}}(h)|_l\cong\ko_l$, we have an exact triangle
\begin{eqnarray}\label{eqn:coho}
\pi_*(\ke|_{\gamma})\longrightarrow\kl_l\longrightarrow\pi_*(\ke|_l),
\end{eqnarray}
where $\pi_*(\ke|_{\gamma})$ is a torsion sheaf supported on the line $\pi(\gamma)$ in $\PP^2$, since $\pi$ is a closed embedding on $\gamma$.
On the other side, the sheaf $\ke|_{l}$ has no higher cohomology: indeed, by \eqref{eq:defofke}, $\ke|_l$ is a quotient of $\ko_l(-1)^{\oplus 8}$.
Hence $\pi_*(\ke|_{l})$ is a torsion sheaf supported on a point, and so $\kl_l$ is a torsion sheaf supported on a line in $\PP^2$.
It is not too hard to see that $\kl_l$ is actually a pure sheaf of dimension $1$: indeed, this follows directly from $\Hom(\kl_l,\kl_l)\cong\CC$.

Finally, let us consider the case $l\subseteq P$ and denote by $C'$ the preimage of $l$ via $\sigma$. By Lemma \ref{lem:stupid}, applied to
\[
\sigma^*I_l\longrightarrow\ko_{\widetilde{Y}}\longrightarrow\sigma^*\ko_l,
\]
we know $\kl_l\cong\Psi(\sigma^*\ko_l)[-1]$. By \cite[Prop.\ 11.12]{Huy}, we have an exact triangle
\[
\ko_{C'}(D)[1]\longrightarrow\sigma^*(\ko_{l})\longrightarrow\ko_{C'},
\]
where, as before, $D$ denotes the exceptional divisor of $\widetilde{Y}$.
By the fact that $\ko_D(-C')\cong\ko_D(-H)$ and by Lemma \ref{lem:stupid} applied to the short exact sequence
\[
0\longrightarrow\ko_{\widetilde{Y}}(h-H)=\ko_{\widetilde{Y}}(-D)\longrightarrow\ko_{\widetilde{Y}}\longrightarrow\ko_D\longrightarrow 0,
\]
we get $\Psi(\ko_D)=0$. Moreover, the short exact sequence
\[
0\longrightarrow\ko_D(-H)\longrightarrow\ko_D\longrightarrow\ko_{C'}\longrightarrow 0,
\]
yields $\Psi(\ko_{C'})\cong\Psi(\ko_D(-H))[1]$. On the other hand, again Lemma \ref{lem:stupid} shows that $\Psi(\ko_D(-H))[1]\cong\Psi(\ko_{\widetilde{Y}}(h-2H))[2]\cong\kb_1[1]$.
This gives the morphism $\kl_l\to\kb_1$.
The argument to show $\Psi(\ko_{C'}(D))\cong\kb_0$ is similar and it is left to the reader.

To prove the last claim, observe that each line $l\subseteq P$ determines uniquely the (isomorphism class of the) object $P_l$ and hence the extension $\kb_1\to\kb_0[2]$ in the above constuction. This yields an injection $P\dual\hookrightarrow\PP(\Hom(\kb_1,\kb_0[2]))$. But now, by Serre duality and adjunction, $\Hom(\kb_1,\kb_0[2])\cong \Hom(\kb_0,\kb_1)\dual\cong\Hom(\ko_{\PP^2},\kb_1)\dual\cong\CC^{\oplus 3}$.
Here we used the fact that the Serre functor of $\cat{T}_Y$ is the shift by $2$.
\end{proof}

\section{Birationality and moduli spaces of sheaves}\label{sec:stab}

In this section we prove Theorem \ref{thm:main2}.
To make clear how the proof goes, let us explain in some detail the structure of this section. In Section \ref{subsec:bir1} we deal with the case of cubic fourfolds satisfying $(\ast)$. In this case the computation can be carried out explicitly and a direct computation provides a description of the birational map. Section \ref{subsec:bir2} completes the proof studying the general case. Here we use a deformation argument based on Hodge theory.
Along the way, we recall in Section \ref{subsec:twistedK3} some facts about twisted sheaves and twisted K3 surfaces.

\subsection{Twisted K3 surfaces}\label{subsec:twistedK3}

Let $X$ be a smooth projective variety and let $\Br(X)$ be its \emph{Brauer group}, i.e., the torsion part of the
cohomology group $\co^2(X,\ko_X^*)$ in the analytic topology. (For more details about Brauer groups, see \cite{Milne}.)

Recall that any $\beta\in\Br(X)$ can be represented by a \v{C}ech cocycle on
an open analytic cover $\{U_i\}_{i\in I}$ of $X$ using the
sections $\beta_{ijk}\in\Gamma(U_i\cap U_j\cap
U_k,\mathcal{O}^*_X)$. A \emph{$\beta$-twisted coherent sheaf}
$\kf$ consists of a collection $(\{\kf_i\}_{i\in
I},\{\varphi_{ij}\}_{i,j\in I})$, where $\kf_i$ is a coherent
sheaf on $U_i$ and $\varphi_{ij}:\kf_j|_{U_i\cap
U_j}\to\kf_i|_{U_i\cap U_j}$ is an isomorphism satisfying the
following conditions:
\[
\mbox{$\varphi_{ii}=\mathrm{id}$;\;\;\;\;\;
$\varphi_{ji}=\varphi_{ij}^{-1}$;\;\;\;\;\;
$\varphi_{ij}\circ\varphi_{jk}\circ\varphi_{ki}=\beta_{ijk}\cdot\mathrm{id}$.}
\]
By $\coh(X,\beta)$ we denote the abelian category of
$\beta$-twisted coherent sheaves on $X$, while $\Db(X,\beta)$
is the bounded derived category $\Db(\coh(X,\beta))$. A \emph{twisted variety} is a pair $(X,\beta)$, where $X$ is a smooth projective variety and $\beta\in\Br(X)$. (For more details about twisted sheaves, see \cite{C}.)

If now $S$ is a K3 surface, one can lift $\beta$ to a \emph{$B$-field} $B\in \co^2(S,\QQ)$. More precisely, using that $\co^3(S,\ZZ)=0$, the long exact sequence in cohomology associate to the exponential short exact sequence on $S$, allows us to lift $\beta\in \co^2(S,\ko_S^*)$ to an element $B\in \co^2(S,\QQ)$ (see, for example, \cite[Sect.\ 1]{HS1}).

For $\sigma$ a generator of $\co^{2,0}(S)$, let $\sigma_B:=\sigma+\sigma\wedge B\in \co^*(S,\CC)$ be the \emph{twisted period} of $(S,\beta)$.
Let $T(S,B)\subseteq \co^*(S,\ZZ)$ be the \emph{twisted transcendental lattice}, i.e., the minimal primitive sublattice of $\co^*(S,\ZZ)$ such that $\sigma_B\in T(S,B)\otimes\CC$. (The lattice structure is given by the Mukai pairing on $\co^*(S,\ZZ)$ as explained in \cite[Sect.\  5.2]{Huy}.) Moreover, we denote by $\Pic(S,B):=T(S,B)^\perp$ the \emph{twisted Picard lattice}, where the orthogonal is taken in $\co^*(S,\ZZ)$ with respect to the Mukai pairing. Recall that the \emph{Mukai pairing} is defined, in terms of cup product, as
\[
\langle v,w\rangle:=-v_0\cup w_4+v_2\cup w_2-v_4\cup w_0,
\]
for every $v=(v_0,v_2,v_4)$ and $w=(w_0,w_2,w_4)$ in $\co^*(S,\mathbb{Z})$.

In \cite{HS1}, it was also defined a twisted weight-$2$ Hodge structure on the total cohomology of $S$ which, together with the Mukai pairing, is then denoted by $\widetilde \co(X,B,\ZZ)$.

Using the $B$-field lift of $\beta\in\Br(S)$, the twisted Chern character ${\rm ch}^B:\coh(S,\beta)\to\widetilde \co(S,B,\ZZ)$ is defined in \cite[Sect.\ 1]{HS1}.
The \emph{Mukai vector} of $\kf\in\Db(S,\beta)$ is $v^B(\kf):=\ch^B(\kf)\cdot\sqrt{\mathrm{td}(S)}\in\widetilde \co(S,B,\ZZ)$.  For $\kf\in\coh(S,\beta)$ we denote by $\rk(\kf)$ and $c_1^B(\kf)$ the degree $0$ and $2$ parts of $\ch^B(\kf)$. Besides the presence of several definition of Chern characters (i.e., twisted or not), in the rest of the paper the Picard group $\Pic(S)$ will be always meant to be embedded in $\co^2(S,\ZZ)$ by taking the untwisted Chern class $c_1(=c_1^0)$.

\begin{lem}\label{lem:NS}
	If $\beta\in\Br(S)$ has order $d$, the lattice $\Pic(S,B)$ is generated by $\Pic(S)$ and the vectors $w_1=(d,dB,0)$ and $w_2=(0,0,1)$ in $\co^*(S,\ZZ)=\co^0(S,\ZZ)\oplus \co^2(S,\ZZ)\oplus \co^4(S,\ZZ)$.
\end{lem}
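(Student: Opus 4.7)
The plan is to characterize $\Pic(S,B)$ as the set of integer classes Mukai-orthogonal to the twisted period $\sigma_B=\sigma+\sigma\wedge B$, and then to solve this single equation by hand for $v=(r,c,s)$. Before computing anything, I would choose the $B$-field lift of $\beta$ so that $dB\in H^2(S,\ZZ)$: since $\beta$ has order $d$, the exponential sequence and the Hodge decomposition together show that any initial lift $B$ can be modified by an element of $\NS(S)\otimes\QQ$ (which preserves $\beta$) to achieve $dB\in H^2(S,\ZZ)$. This normalisation is needed for the vector $(d,dB,0)$ appearing in the statement to be integral.

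The identity $\Pic(S,B)=\{v\in H^*(S,\ZZ):\langle v,\sigma_B\rangle=0\}$ follows from the definition of $T(S,B)$ as the smallest primitive sublattice of $H^*(S,\ZZ)$ whose complexification contains $\sigma_B$, noting that for real $v$ orthogonality to $\sigma_B$ is automatically equivalent to orthogonality to $\bar\sigma_B$ as well. For $v=(r,c,s)$ the Mukai pairing reduces to
\[
\langle v,\sigma_B\rangle = c\cdot\sigma - r\,(\sigma\wedge B) = (c-rB)\cdot\sigma,
\]
with $\cdot$ the intersection pairing on $H^2$. Since $\sigma$ spans $H^{2,0}(S)$, this vanishes iff $c-rB$ has no $(0,2)$-component; combined with the reality of $c-rB\in H^2(S,\QQ)$, this amounts to $c-rB\in\NS(S)\otimes\QQ$.

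I would finish by reading off the generators. For $r=0$ one has $c\in\NS(S)\otimes\QQ\cap H^2(S,\ZZ)=\Pic(S)$ and $s\in\ZZ$ arbitrary, so the rank-zero slice is $\Pic(S)\oplus\ZZ\cdot(0,0,1)$. For $r\neq 0$ the condition $rB\in H^2(S,\ZZ)+\NS(S)\otimes\QQ$ forces $r$ to be a multiple of $d$, since $d$ is exactly the order of $\beta$ in $\Br(S)$; the minimal positive case $r=d$ is realised by $(d,dB,0)$ thanks to the normalisation, and any $(kd,c,s)\in\Pic(S,B)$ then decomposes as $k(d,dB,0)+(0,\alpha,s)$ with $\alpha\in\Pic(S)$, giving the claimed generating set. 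The step I expect to be the main technical hurdle is identifying the image of the rank projection $\Pic(S,B)\to\ZZ$ with exactly $d\ZZ$: this requires translating the order of $\beta$ in $\Br(S)$ into the smallest positive $r$ for which $rB$ differs from an integer class by an element of $\NS(S)\otimes\QQ$, via the exponential sequence.
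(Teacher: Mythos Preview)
Your argument is correct and genuinely different from the paper's. The paper shows that the sublattice $L$ generated by $\Pic(S)$, $(d,dB,0)$ and $(0,0,1)$ is contained in $\Pic(S,B)$, then compares discriminants: it computes $\operatorname{disc}(L)=d^2\operatorname{disc}(\Pic(S))$, and identifies this with $\operatorname{disc}(\Pic(S,B))=\operatorname{disc}(T(S,B))$ via unimodularity of the Mukai lattice together with the fact (quoted from \cite{HS1}) that $T(S,B)$ sits inside $T(S)$ with index $d$. You instead characterise $\Pic(S,B)$ directly as the integral solutions of $(c-rB)\cdot\sigma=0$ and solve this by hand, reducing to the Lefschetz $(1,1)$ theorem and the exponential-sequence description of the order of $\beta$. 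Your route is more elementary and self-contained; the paper's is shorter once one imports the index statement for $T(S,B)\subset T(S)$ and Nikulin's discriminant formalism. One minor remark: the normalisation $dB\in H^2(S,\ZZ)$ that you single out as a preliminary step is already implicit in the statement (otherwise $(d,dB,0)$ would not be integral), so you need not argue for its existence, only use it.
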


\begin{proof}
Let $c$ be the discriminant of $\Pic(S)$. Recall that the discriminant of a lattice is the determinant of the matrix representing the corresponding symmetric bilinear form  (see \cite[Sect.\ 1]{Ni} for more details about it). By \cite[Prop.\ 1.6.1]{Ni}, the discriminants of $\Pic(S)$ and $T(S)$ coincide, up to sign. For the same reason, this happens for the discriminants of $\Pic(S,B)$ and $T(S,B)$ as well. Now $T(S,B)$ can be realized as a (non-primitive) sublattice of $T(S)$ of index $d$ and so, up to sign, the discriminant of $T(S,B)$ and of $\Pic(S,B)$ is $d^2c$ (see, for example, \cite[Remark 3.1]{HS1}).

Obviously the two vectors $w_1,w_2$ and the elements of $\Pic(S)$ are orthogonal to $\sigma_B$ and thus belong to $\Pic(S,B)$. Let $L$ be the sublattice of $\Pic(S,B)$ generated by those elements. An easy calculation shows that $d^2c$ is the discriminant of $L$. Therefore $\Pic(S,B)$ and $L$ have the same discriminant and so they coincide.
\end{proof}

The following is a straightforward consequence of the result above.

\begin{cor}\label{cor:rankdiv}
	If $\beta\in\Br(S)$ has order $d$, then the rank of any $\ka\in\coh(S,\beta)$ is divisible by $d$.
\end{cor}

\smallskip

Moving to the geometric situation we are interested in, assume now that $Y$ is a cubic fourfold satisfying $(\ast)$. We have observed in Section \ref{subsec:semiorthogonal} that there exists an Azumaya algebra $\ka_0$ on $S$, the K3 surface which is the double cover of $\PP^2$ ramified along $C$, such that $\Db(S,\ka_0)\cong\Db(\PP^2,\kb_0)$ and $f_*\ka_0=\kb_0$. Such an Azumaya algebra corresponds to the choice of the element $\beta$ in the $2$-torsion part of the Brauer group $\Br(S)$ of $S$ which, in turn, corresponds to the $\PP^1$-fibration $F\to S$ parametrizing the lines contained in the fibres of $\pi:\widetilde{Y}\to\PP^2$. Therefore $\Db(S,\beta)\cong\Db(\PP^2,\kb_0)$ and this equivalence is realized by the following composition of equivalences
\begin{equation}\label{eqn:equiv}
\xymatrix{
\Xi:\coh(S,\beta)\ar[rr]^{(-)\otimes_{\ko_S}\e\dual}&&\coh(S,\ka_0)\ar[rr]^{f_*}&&\coh(\PP^2,\kb_0),
}
\end{equation}
where $\e$ is a rank-$2$ locally free $\beta$-twisted sheaf such that $\ka_0\cong\ke nd(\e)$. Obviously, $\Xi(\e)=\kb_0$. For sake of simplicity we denote by $\Xi$ the induced functor on the level of derived categories.
Set $\ke_1\in\coh(S,\beta)$ and, for $l\subseteq Y$ a line, $\kj_l\in\Db(S,\beta)$ to be such that
\[
\kb_1=\Xi(\ke_1)\qquad\mathrm{and}\qquad\kl_l=\Xi(\kj_l).
\]
Notice that $\ke_1$ is locally-free of rank 2.
Also, for a line $l\subseteq Y$ not contained in the plane $P$, the sheaf $\kj_l$ is pure of dimension $1$.
Indeed, by Proposition \ref{prop:ideallines}, $\kl_l$ is a vector bundle (of rank 4) supported on a line in $\PP^2$. (See \eqref{eq:Dan} for the definition of $\kl_l$.)

\begin{lem}\label{lem:calc}
	Let $Y$ be a cubic fourfold satisfying $(\ast)$. If $l$ is a line in $Y$, then $v^B(\kj_l)=(0,f^*h,s)$, for some integer $s$. Moreover, $v^B(\ke_j)=(2,\ell+jf^*h+2B,s_j)$, for some $\ell\in\Pic(S)$, $s_j\in\ZZ$ and $j=0,1$.
\end{lem}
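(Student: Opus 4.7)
The plan is to reduce both claims to a rank computation plus a first-Chern-class identification via Lemma \ref{lem:NS}. Since $\beta$ has order $2$, every algebraic class in $\Pic(S,B)$ has the form $(2a,D+2aB,c)$ with $a,c\in\ZZ$ and $D\in\Pic(S)$. Hence, once I show $\rk(\ke_j)=2$ and $\rk(\kj_l)=0$, the lemma automatically gives $v^B(\ke_j)=(2,K_j+2B,s_j)$ and $v^B(\kj_l)=(0,D,s)$ with $K_j,D\in\Pic(S)$ and $s_j,s\in\ZZ$; the only remaining content will be to identify $D=f^*h$ and to relate $K_0$ to $K_1$.

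For the rank of $\ke_j$, I would read it off directly from the composition $\Xi=f_*\circ((-)\otimes\e\dual)$ in \eqref{eqn:equiv}: since $\e\dual$ has rank $2$ and $f$ is finite of degree $2$, the $\ko_{\PP^2}$-rank of $\Xi(\ke_j)=\kb_j$ equals $4\rk(\ke_j)$, and the explicit description recalled in Section \ref{subsec:geometry} shows $\rk_{\ko_{\PP^2}}(\kb_j)=8$ for both $j=0,1$. This forces $\rk(\ke_j)=2$. For $\kj_l$ with $l\not\subseteq P$, Proposition \ref{prop:ideallines} identifies $\ki_l$ as a pure torsion sheaf on a line $L\subseteq\PP^2$, so $\kj_l$ is torsion; finiteness of $f$ and the projection formula then confine the support of $\kj_l$ to $f^{-1}(L)$, a curve of class $f^*h$. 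A comparison of generic ranks along the support — matching the $\ko_L$-rank $4$ of $\ki_l$ coming from the formula $\ki_l\cong j_*(\ke|_l)\otimes\ko_{\PP^2}(h)$ against the combined factor $4$ produced by the degree-$2$ pushforward and the rank-$2$ twist — forces $\kj_l$ to have generic rank $1$ on $f^{-1}(L)$, so $c_1^B(\kj_l)=f^*h$, at least for $l$ generic enough that $f^{-1}(L)$ is irreducible.

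Finally, for $l\subseteq P$, the exact triangle $\kb_0[1]\to\ki_l\to\kb_1$ of Proposition \ref{prop:ideallines} transports via $\Xi^{-1}$ into a triangle in $\Db(S,\beta)$, yielding $[\kj_l]=[\ke_1]-[\ke_0]$ in $K$-theory and therefore $v^B(\kj_l)=v^B(\ke_1)-v^B(\ke_0)$. Since $F(Y)$ is connected, the Mukai vector $v^B(\kj_l)$ is independent of $l$, and comparing with the generic computation of the previous paragraph gives $K_1-K_0=f^*h$ in $\Pic(S)$; setting $K:=K_0$ then produces the stated $v^B(\ke_j)=(2,K+jf^*h+2B,s_j)$ for $j=0,1$. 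The principal obstacle is the rank-on-support computation for $l\not\subseteq P$: one must track carefully how a generic rank on $f^{-1}(L)\subseteq S$ transforms into a generic rank on $L\subseteq\PP^2$ under the twisted equivalence $\Xi$, in order to pin down the coefficient of $f^*h$ as exactly $1$ rather than merely a positive multiple.
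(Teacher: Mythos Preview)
Your proposal is correct and follows essentially the same route as the paper: compute $\rk(\ke_j)=2$ from the rank-$8$ $\ko_{\PP^2}$-module structure of $\kb_j$ and invoke Lemma \ref{lem:NS}, read off $c_1^B(\kj_l)=f^*h$ for generic $l\not\subseteq P$ from the description of $\ki_l$ in Proposition \ref{prop:ideallines}, and then use the triangle $\e[1]\to\kj_l\to\f$ for $l\subseteq P$ together with constancy of $v^B(\kj_l)$ to deduce $K_1-K_0=f^*h$. The paper is terser precisely at the point you flag---it asserts without further comment that a generic $\kj_l$ being a vector bundle on a curve in $|f^*h|$ forces $v^B(\kj_l)=(0,f^*h,s)$---so your explicit generic-rank-$1$ check via the factor-of-$4$ bookkeeping through $\Xi$ is a welcome addition rather than a deviation; one small caveat is that the lemma is stated under $(\ast)$ only, so you should say ``$\beta$ has order dividing $2$'' rather than ``order $2$'', but your argument goes through verbatim in either case.
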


\begin{proof}
Take a line $l\subseteq Y$ such that $l\not\subseteq P$.
Then $\kj_l$ is a pure torsion-free sheaf supported on a curve in the linear system of $f^*h$. Assuming $l$ generic with the above property, we necessarily have that $v^B(\kj_l)$ is as required.
As the Mukai vector does not depend on the choice of the line, this holds for any line $l\subseteq Y$.

To prove the second assertion in the statement, we use this and the fact that, by Proposition \ref{prop:ideallines}, for $l\subseteq P\subseteq Y$, the complex $\kj_l$ is extension of $\e[1]$ and $\f$. Thus it is enough to prove that $v^B(\e)=(2,\ell+2B,s_0)\in\Pic(S,B)$, for $\ell\in\Pic(S)$. This follows directly from Lemma \ref{lem:NS}.
\end{proof}

For simplicity, we will use the same symbol $\Xi\colon\Db(S,\beta)\to\Db(\PP^2,\kb_0)$ for the derived functor of the one in \eqref{eqn:equiv}. We denote by $\Xi_L$ the left adjoint of $\Xi$.
We will also denote by $\Theta$ the composition of functors
\[
\Theta:=\Xi_L\circ(\sigma_*\circ\Phi'')^{-1}\circ R_{\ko_{Y}(-H)}\circ((-)\otimes\ko_Y(-H)),
\]
where $(\sigma_*\circ\Phi'')^{-1}$ is the functor defined in Section \ref{subsec:semiorthogonal}, and $R_{\ko_{Y}(-H)}$ is the right mutation in $\ko_Y(-H)$.
The functor $\Theta:\cat{T}_Y\to\Db(S,\beta)$ is an equivalence.

For the convenience of the reader, we put all the functors defined so far and providing different incarnations of the triangulated category $\cat{T}_Y$ in the following diagram
\[
\xymatrix{
\cat{T}_Y\ar[dd]_-{\Theta}\ar[rrrr]^-{R_{\ko_{Y}(-H)}\circ((-)\otimes\ko_Y(-H))}&&&&\cat{T}_Y\ar[rr]^-{(\sigma_*\comp\Phi'')^{-1}}&&\Db(\PP^2,\kb_0)\ar@<-1ex>[ddllllll]_-{\Xi_L}\\
&&&\\
\Db(S,\beta)\ar[uurrrrrr]_-{\Xi}\ar[rrrrrr]^-{(-)\otimes_{\ko_S}\e\dual}&&&&&&\Db(S,\ka_0),\ar[uu]_-{f_*}
}
\]
where the upper and lower triangles commute.

\subsection{Birationality I}\label{subsec:bir1}

Following for example \cite{Y}, a notion of stability for twisted sheaves can be introduced and the moduli spaces of twisted stable sheaves can be constructed. For a K3 surface $S$, given a primitive $v\in\Pic(S,B)$ with $\langle v,v\rangle\geq2$, we denote by
\[
M(S,v,B)
\]
the moduli space of $\beta$-twisted stable (with respect to a generic polarization) sheaves on $S$ with twisted Mukai vector $v$ (here we specify the lift $B$ of $\beta$). By \cite{Y}, $M(S,v,B)$ is a smooth projective holomorphic symplectic manifold and, as lattices,
\[
\co^2(M(S,v,B),\ZZ)\cong v^\perp,
\]
where the orthogonal is taken in $\widetilde \co(S,B,\ZZ)$.
The weight-$2$ Hodge structure on $v^\perp$, which is compatible with this isometry, is the one induced by the twisted period $\sigma_S+\sigma_S\wedge B$, for $\CC\cdot\sigma_S=\co^{2,0}(S)$. In other words, the $\co^{2,0}$-part of the weight-$2$ Hodge structure on $v^\perp$ is given by $\CC(\sigma_S+\sigma_S\wedge B)$, as $\langle v,\sigma_S+\sigma_S\wedge B\rangle=0$.

The following proposition proves the second statement in Theorem \ref{thm:main2}. We keep the same notation as before and denote by $P\dual\subseteq F(Y)$ the dual plane of lines contained in the plane $P$.

\begin{prop}\label{prop:bir1}
	Let $Y$ be a cubic fourfold satisfying $(\ast)$. Then there exists a birational map $F(Y)\dasharrow M(S,v,B)$, where $v=(0,f^*h,s)\in\Pic(S,B)$ for some $s\in\ZZ$, which is either an isomorphism or a Mukai flop in the plane $P\dual$.
\end{prop}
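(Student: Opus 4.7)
The plan is to transport the identification $F(Y)\cong\{P_l\}_{l\in F(Y)}$ of Remark \ref{rmk:modsp} through the chain of equivalences $\cat{T}_Y\cong\Db(\PP^2,\kb_0)\cong\Db(S,\beta)$ assembled from Kuznetsov's result and the functor $\Xi$ of \eqref{eqn:equiv}. Concretely, I define a rational map $\phi\colon F(Y)\dashrightarrow M(S,v,\beta)$ by $l\mapsto\kj_l$, which is a priori only a set-theoretic assignment on the open set $F(Y)\setminus P\dual$ of lines not contained in $P$. By Proposition \ref{prop:ideallines}, for any such $l$ the object $\kj_l$ is a pure one-dimensional twisted coherent sheaf on $S$, and by Lemma \ref{lem:calc} its Mukai vector is the fixed class $v=(0,f^*h,s)$. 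To promote $\phi$ to a morphism I verify stability with respect to a generic polarization: for generic $l\not\subseteq P$ the image of $l$ in $\PP^2$ misses the branch sextic $C$, so the support of $\kj_l$ is the smooth genus-two curve in $|f^*h|$ pulled back from that line, and the restriction of $\kj_l$ there is a rank-one twisted line bundle, hence automatically stable; the non-generic lines form a closed subset of codimension at least two and stability is preserved by standard specialization arguments.

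Next, I show that $\phi$ is birational. Injectivity on $F(Y)\setminus P\dual$ is immediate: the composition $l\mapsto P_l\mapsto\kj_l$ is the tautological identification of $F(Y)$ with the moduli space of the $P_l$'s followed by an equivalence of categories, so distinct lines give non-isomorphic $\kj_l$'s. Both sides are projective irreducible holomorphic symplectic fourfolds, $F(Y)$ by Beauville--Donagi \cite{BD} and $M(S,v,\beta)$ by Yoshioka's construction \cite{Y}, once one notes that $v=(0,f^*h,s)$ is primitive in $\Pic(S,B)$ and $v^2=(f^*h)^2=2$, giving $\dim M(S,v,\beta)=v^2+2=4$. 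An injective morphism on a dense open subset between irreducible projective varieties of the same dimension is dominant, so $\phi$ is a birational morphism.

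The final step invokes the general fact that a birational map between projective irreducible holomorphic symplectic manifolds is necessarily an isomorphism in codimension one: no divisor can be contracted, since the canonical bundle is trivial on both sides. Thus the indeterminacy loci of $\phi$ and of $\phi^{-1}$ have codimension at least two, and on $F(Y)$ any such locus is contained in $P\dual\cong\PP^2$, which by Proposition \ref{prop:ideallines} is precisely where $\kj_l$ becomes a genuine two-term complex rather than a sheaf. By the classification of birational modifications between irreducible holomorphic symplectic fourfolds that are isomorphisms in codimension one, any non-trivial such transformation is a Mukai flop along a $\PP^2$; hence $\phi$ is either an isomorphism or the Mukai flop centered at $P\dual$.

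The main obstacle I anticipate is the uniform verification of stability of $\kj_l$ on all of $F(Y)\setminus P\dual$, especially at lines whose one-dimensional support in $S$ degenerates, together with a clean identification of the flopping plane on the moduli side: the candidate is the $\PP^2=\PP(\Hom(\kb_1,\kb_0[2]))$ parametrizing the extensions $\kb_1\to\kb_0[2]$ described at the end of Proposition \ref{prop:ideallines}, and one must check that, under the Mukai flop, the sheaf-theoretic limits of the $\kj_l$ as $l$ approaches $P\dual$ match the complexes coming from these extensions.
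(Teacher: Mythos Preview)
Your overall strategy---transport the family of $P_l$'s through the equivalence $\cat{T}_Y\cong\Db(S,\beta)$ to get a birational map defined on $F(Y)\setminus P\dual$---is exactly what the paper does. The difference lies in the final step, where you establish that the birational map is either an isomorphism or a single Mukai flop.

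You appeal to a ``classification of birational modifications between irreducible holomorphic symplectic fourfolds'' and conclude that any such map is a Mukai flop along a $\PP^2$. The relevant result (the Hu--Yau conjecture for fourfolds, proved by Wierzba--Wi\'sniewski) says that any birational map between projective irreducible symplectic fourfolds decomposes as a finite \emph{sequence} of Mukai flops, not that it \emph{is} a single one. To finish your argument you would still need to observe: since the indeterminacy locus on $F(Y)$ is contained in the single plane $P\dual$, the first flop in any such factorization must be exactly the flop in $P\dual$; after that flop the exceptional locus is the dual plane $P'$, and the next flop (if any) must be in $P'$, bringing you back to $F(Y)$; hence the composite is either the identity or the single flop in $P\dual$. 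This is not hard, but it is a genuine step you have omitted, and it relies on a theorem that is heavier than what the paper uses.

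The paper instead argues directly via Huybrechts' criterion on the K\"ahler cone (\cite[Prop.\ 2.1 and Cor.\ 2.2]{Huy1}): if the birational map $\gamma$ does not extend to an isomorphism, then every rational curve $D\subseteq P\dual$ satisfies $\gamma^*h'\cdot D<0$ for $h'$ ample on the target; performing the Mukai flop in $P\dual$ reverses these signs, so by the same criterion the flopped map $\gamma''$ extends to an isomorphism. This avoids the factorization theorem entirely and gives the dichotomy in one stroke. Your route is valid in principle but leans on stronger external input; the paper's route is more elementary and self-contained.

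As for your concern about stability of $\kj_l$ for all $l\not\subseteq P$: the paper dispatches this in one line by observing that $\kj_l$ is a pure one-dimensional twisted sheaf with primitive Mukai vector $(0,f^*h,s)$ (Lemma \ref{lem:calc}), and does not carry out the case analysis you anticipate. Your worry about identifying the flopping plane on the moduli side is not addressed in the proof of this proposition at all; it is taken up only later (Remark \ref{rmk:MukaiFlopUtah}) using the stability-condition machinery of Section \ref{sec:iso}.
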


\begin{proof}
By Remark \ref{rmk:modsp}, $F(Y)$ is identified with the moduli space of the stable sheaves $\kf_l$, for $l\subseteq Y$ a line.
By definition,  $\Theta(\kf_l)=\kj_l$. On the other hand, by Proposition \ref{prop:ideallines}, if $l\not\subseteq P$, then the pure torsion sheaf $\kj_l\in\coh(S,\beta)$ is stable, since its Mukai vector is primitive (see Lemma \ref{lem:calc}) and, by $(\ast)$, the divisor $f^*h$ is indecomposable.

Let $P\dual\subseteq F(Y)$ be the plane dual to $P\subseteq Y$ and parametrizing the lines in $P$. The above remarks show that $\Theta$ yields an isomorphism between the open subset $U=F(Y)\setminus P\dual$ and an open subset $U'\subseteq M(S,v,B)$, where $v=(0,f^*h,s)$. Therefore, we have a birational map $\gamma:F(Y)\dashrightarrow M(S,v,B)$.

Fix $h'\in\Pic(M(S,v,B))$ an ample polarization. Suppose that $\gamma$ does not extend to an isomorphism. Hence, by \cite[Prop.\ 2.1 and Cor.\ 2.2]{Huy1}, for all rational curves $D\subseteq P\dual$, we have $\gamma^*h'\cdot D<0$ (see also \cite[Cor.\ 2.6]{Huy1}). Consider the Mukai flop $\gamma':M\dashrightarrow F(Y)$ of $F(Y)$ in the plane $P\dual$ and let $P'\subseteq M$ be the corresponding dual plane in $M$. Let $\gamma'':M\dashrightarrow M(S,v,B)$ be the composition $\gamma\circ\gamma'$. Since a Mukai flop reverses the sign of the intersection with the rational curves contained in the exceptional locus, the above remarks give that $(\gamma'')^*h'\cdot D'>0$, for any rational curve $D'$ in $P'$. Again by \cite[Prop.\ 2.1 and Cor.\ 2.2]{Huy1}, this means that $\gamma''$ extends to an isomorphism yielding the desired conclusion.
\end{proof}

The fact that we have to deal with moduli spaces of twisted sheaves with (in general) non-trivial twists is quite relevant. This is made clear by Proposition \ref{prop:nospmod}. Moreover, we will see in Remark \ref{rmk:MukaiFlopUtah} that, if $\beta$ is non-trivial, then the birational map in Proposition \ref{prop:bir1} is actually a Mukai flop.

The following easy corollary is a specialization of the previous result to a dense subset of the moduli space $\kc$ of cubic fourfolds (see \cite[Sect. 2.2]{H2} for more details about the GIT construction of $\kc$). A cubic fourfold $Y$ containing a plane $P$ is \emph{very generic} if
\[
\NS_2(Y):=\co^4(Y,\ZZ)\cap \co^{2,2}(Y)=\langle H^2,P\rangle,
\]
where, for simplicity, we denote by $H^2$ the self-intersection of the ample line bundle $\ko_Y(H)$. It is easy to see that a very generic $Y$ satisfies $(\ast)$ (see, for example, \cite{BMMS} and Remark \ref{rmk:stupid3}).

\begin{cor}\label{cor:birgen}
	If $Y$ is very generic, then there exists a birational morphism $F(Y)\dasharrow M(S,v,B)$, where $v=(0,f^*h,0)$, which is either an isomorphism or a Mukai flop in the plane $P\dual$.
\end{cor}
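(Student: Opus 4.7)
The plan is to invoke Proposition \ref{prop:bir1} and then pin down the third component of the Mukai vector using the very generic hypothesis. Since every very generic $Y$ automatically satisfies $(\ast)$ (as recalled just before the statement), Proposition \ref{prop:bir1} immediately yields a birational isomorphism $F(Y) \dashrightarrow M(S, (0, f^*h, s), \beta)$ for some $s \in \ZZ$, of the required geometric type (isomorphism or Mukai flop in a plane). The remaining task is to arrange $s = 0$.

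First, I would extract from the very generic hypothesis that $\Pic(S) = \ZZ \cdot f^*h$. The reasoning is that Kuznetsov's equivalence $\cat{T}_Y \cong \Db(S, \beta)$ induces a Hodge isometry on Mukai lattices, so the ranks of the algebraic parts match. Since $\rank \NS_2(Y) = 2$, one obtains $\rank \Pic(S, B) = 3$; combined with Lemma \ref{lem:NS} and the fact that $\beta$ has order two, the extra generators $(2, 2B, 0)$ and $(0, 0, 1)$ contribute two independent classes outside $\Pic(S)$, forcing $\rank \Pic(S) = 1$ with $f^*h$ as generator.

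Next, I would use the autoequivalence of $\Db(S, \beta)$ given by $\kf \mapsto \kf \otimes \ko_S(k f^*h)$, which transforms Mukai vectors as $(0, f^*h, s) \mapsto (0, f^*h, s + 2k)$ since $(f^*h)^2 = 2$. Composing the functor $\Theta$ from the proof of Proposition \ref{prop:bir1} with this autoequivalence induces an isomorphism of moduli spaces and reduces the problem to $s \in \{0, 1\}$.

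The hard part will be to rule out $s = 1$. I would do this by an explicit Chern character computation of $v^B(\kj_l)$ for a generic line $l$ disjoint from $P$: using the identification $\ki_l \cong j_*(\ke|_l) \otimes \ko_{\PP^2}(h)$ from the proof of Proposition \ref{prop:ideallines}, the resolution \eqref{eq:defofke} of $\ke$, and the explicit form \eqref{eqn:equiv} of $\Xi$, a Riemann--Roch calculation on $\PP^2$ together with the transformation rules for $\Xi$ should show that the third component of $v^B(\kj_l)$ is even, giving $s = 0$ after the normalization.
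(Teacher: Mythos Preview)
Your reduction to $s\in\{0,1\}$ via Proposition~\ref{prop:bir1} and tensoring by powers of $\ko_S(f^*h)$ is exactly what the paper does. The divergence is in how $s=1$ is excluded, and here your proposal has a genuine gap.

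The paper does not compute $v^B(\kj_l)$ directly. Instead it argues by a lattice obstruction: by \cite{BD}, $\Pic(F(Y))$ carries a class of Beauville--Bogomolov square~$6$, while the birational isomorphism and \cite{Y} give isometries $\Pic(F(Y))\cong\Pic(M(S,v,\beta))\cong\Pic(S,B)\cap v^\perp$. Using van~Geemen's explicit description of the $B$-field lift (writing $f^*h=e_1+e_2$ in a hyperbolic summand of $\Lambda$ and $B=\tfrac12(e_2+\lambda)$ with $(\lambda,\lambda)=2$), one computes $\Pic(S,B)\cap(0,f^*h,1)^\perp$ and checks that this rank-$2$ lattice does not represent~$6$; hence $\epsilon=1$ is impossible.

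Your proposed Riemann--Roch computation cannot close the argument with the ingredients you list. The degree-$4$ component $s$ of $v^B(\kj_l)$ depends on the specific $B$-field lift (replacing $B$ by $B+\lambda$ with $\lambda\in H^2(S,\ZZ)$ shifts $s$ by $f^*h\cdot\lambda$, which can be any integer) and on the unknown constants in $v^B(\e)=(2,K+2B,s_0)$ from Lemma~\ref{lem:calc}. Knowing $\ki_l$ as an $\ko_{\PP^2}$-module, the resolution~\eqref{eq:defofke}, and the abstract form of $\Xi$ determines quantities like $\chi(\kb_i,\ki_l)$, but translating these into $s$ still requires $(K+2B)\cdot f^*h$ and $s_0$, which you have not pinned down. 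To make your route work you would first have to fix $B$ explicitly (as the paper does via van~Geemen) and then carry the computation through $\Xi$; at that point the paper's lattice argument using the square-$6$ class is both shorter and avoids tracking $\e$ altogether.
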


\begin{proof}
By Proposition \ref{prop:bir1} and by tensoring with an appropriate power of $f^*h\in\Pic(S)$, one deduces that there exists a birational morphism $F(Y)\dasharrow M(S,v,B)$, where $v=(0,f^*h,\epsilon)$ and $\epsilon\in\{0,1\}$. Indeed, the tensor product with $f^*h$ changes $v$ in $v+(0,0,2)$.
By \cite{BD}, $\Pic(F(Y))$ contains a line bundle $L$ with self-intersection $6$ with respect to the Beauville--Bogomolov quadratic form. Moreover, by \cite{Y} and the existence of the above birational map, we have a sequence of isometries of lattices
\begin{equation}\label{eqn:NS1}
\Pic(F(Y))\cong\Pic(M(S,v,B))\cong\Pic(S,B)\cap v^\perp.
\end{equation}

Therefore, it is enough to show that, if $\epsilon=1$, then $\Pic(M(S,v,B))$ does not contain an element of self-intersection $6$.
For this we use the calculations in \cite{vG1}.
Indeed, denote by $e_1,e_2$ the two natural generators of the first copy of the hyperbolic lattice in the K3 lattice $\Lambda:=U^{\oplus 3}\oplus E_8(-1)^{\oplus 2}$ (see \cite{Ni} for the definition of the lattices $U$ and $E_8$). We can identify $\co^2(S,\ZZ)$ to $\Lambda$ in such a way that $f^*h$ is represented by the vector $e_1+e_2$. By \cite{vG} and using this identification, the lift $B$ of $\beta$ has the form $B=\frac{e_2+\lambda}{2}$, for some $\lambda\in U^{\oplus 2}\oplus E_8(-1)^{\oplus 2}$ and $(\lambda,\lambda)=2$. If $\epsilon=1$, then, by the last isometry in \eqref{eqn:NS1}, an easy calculation (using Lemma \ref{lem:NS}) shows that $\Pic(M(S,v,B))\cong\langle (0,0,1),(4,2\lambda+e_1+2e_2,1)\rangle$. Hence the intersection form on $\Pic(M(S,v,B))$ would not represent $6$ when $\epsilon=1$.
\end{proof}

\subsection{Birationality II: end of the proof of Theorem \ref{thm:main2}}\label{subsec:bir2}

Let us now consider the general case of $Y$ any cubic fourfold containing a plane $P$. In particular, we prove that, for all cubic fourfolds $Y$ containing a plane, the Fano variety $F(Y)$ is birational to a smooth projective moduli space of twisted sheaves on a K3 surface which, together with the generic case in the previous section, is precisely the content of Theorem \ref{thm:main2}. The argument is divided into a few steps.

\subsubsection{Cohomologies of cubic fourfolds and Fano varieties}\label{subsubsec:lattice}

Let us start by recalling some lattice theoretic properties of the cohomologies of the cubic fourfolds containing a plane, their Fano varieties of lines and the associated K3 surfaces. A detailed explanation of what we are about to discuss is in \cite{H2} (see also \cite{V}).

If $Y$ is a cubic fourfold, by \cite[Prop.\ 6]{BD}, there exists a Hodge isometry
\begin{equation}\label{eqn:fano1}
	\varphi:\co^2(F(Y),\ZZ)_\mathrm{prim}\isomor\co^4(Y,\ZZ)_\mathrm{prim}(-1),
\end{equation}
where $(-1)$ just reverses the signature.
Here $\co^4(Y,\ZZ)_\mathrm{prim}$ is the orthogonal complement (with respect to the cup-product) of the self-intersection $H^2$ of $\ko_Y(H)$. Analogously, $\co^2(F(Y),\ZZ)_\mathrm{prim}$ is the orthogonal complement (with respect to the Beauville--Bogomolov form) of the natural ample polarization coming from the embedding $F(Y)\hookrightarrow\mathrm{Gr}(2,6)$ into the Grassmannian of lines in $\PP^5$.

Let $L$ be the lattice $(1)^{\oplus 21}\oplus(-1)^{\oplus 2}$ with the choices of a distinguished class $\h^2$ of self-intersection $3$ and of an isometry $\psi:\co^4(Y,\ZZ)\to L$ sending $H^2$ to $\h^2$. Notice that all the vectors in $L$ of self-intersection $3$ are permuted by the isometries in the orthogonal group $\mathrm{O}(L)$. Set $L_0:=(\h^2)^\perp=\psi(\co^4(Y,\ZZ)_\mathrm{prim})$, $K:=\psi(\langle H^2,P\rangle)$ and $L_1:=K^\perp\subseteq L_0$, where the orthogonals are taken in $L$. As explained in \cite[Prop.\ 3.2.4]{H2}, we have the following fact.

\begin{lem}\label{lem:embprim}
The primitive embedding of $L_1$ into $L$ is unique, up to isometries of $L$ acting as the identity on $\h^2$.
\end{lem}

Let $Y'$ be a very generic cubic fourfold with a plane with an isometry $\psi:\co^4(Y',\ZZ)\to L$ as above. Then
\begin{equation}\label{eqn:latt1}
L_1=\psi(T(Y'))=\psi(\varphi(T(F(Y')))(-1)),
\end{equation}
where $T(Y')\hookrightarrow \co^4(Y',\ZZ)$ and $T(F(Y'))\hookrightarrow \co^2(F(Y'),\ZZ)$ are the smallest primitive sublattices with the property that $\sigma_Y\in T(Y')\otimes\CC$ and $\sigma_{F(Y')}\in T(F(Y'))\otimes\CC$. Here $\CC\sigma_{Y'}=\co^{3,1}(Y')$ and $\CC\sigma_{F(Y')}=\co^{2,0}(F(Y'))$. In other words, $T(Y')=(\sigma_{Y'}^\perp\cap \co^4(Y,\ZZ))^\perp$ and similarly for $T(F(Y'))$ (here the orthogonality is in $\co^4(Y',\ZZ)$ and $\co^2(F(Y'),\ZZ)$ respectively).

\subsubsection{Associated twisted K3 surfaces}\label{subsubsec:K3s}

Take $Y'$ to be again a fixed very generic cubic fourfold with a plane $P$. At the same time, we fix an isometry $\psi\colon\co^4(Y',\ZZ)\to L$ such that $\psi(H^2)=\h^2$ which, by \eqref{eqn:fano1}, gives isometries $\psi':\co^2(F(Y'),\ZZ)_\mathrm{prim}\to L_0(-1)$ and $\psi'':T(F(Y'))\to L_1(-1)$.

By \cite[Sect.\ 1]{V} (use, in particular, \cite[Prop.\ 2]{V} and the fact that $T(Y')$ is orthogonal to $H^2$ and $P$ in $\co^4(Y',\ZZ)$), there exists an index-$2$ embedding
\begin{equation}\label{eqn:FanoK3}
\xi:T(F(Y'))\cong T(Y')(-1)\hookrightarrow T(S)\cong\co^2(S,\ZZ)_\mathrm{prim},
\end{equation}
where $S$ is the double cover of $\PP^2$ ramified along the smooth sextic associated to $Y'$ and $P$. Here $\co^2(S,\ZZ)_\mathrm{prim}$ is the orthogonal complement of $f^*h$ in $\co^2(S,\ZZ)$. Here and for the rest of the section, we keep the same notation as in Section \ref{subsec:geometry}.

By Corollary \ref{cor:birgen}, there exists a birational map
\begin{equation}\label{eqn:morbir}
\xymatrix{\gamma:F(Y')\ar@{-->}[r]& M(S,v,B)},
\end{equation}
where $M(S,v,B)$ is the moduli space of $\beta$-twisted sheaves with Mukai vector $v:=(0,f^*h,0)$. Hence, there is a Hodge isometry $\gamma^*:\co^2(M(S,v,B),\ZZ)\isomor \co^2(F(Y'),\ZZ)$. In particular, $$T(M(S,v,B)))=(\gamma^*)^{-1}(T(F(Y'))),$$ where $T(M(S,v,B))$ is the minimal primitive sublattice of $\co^2(M(S,v,B),\ZZ)$ whose tensorization by $\CC$ contains $\co^{2,0}(M(S,v,b))$. The lattice $\co^2(M(S,v,B),\ZZ)$ is naturally identified to $v^\perp$, with the orthogonal taken in $\widetilde{\co}(S,\ZZ)$ with respect to the Mukai pairing (see \cite[Thm.\ 3.19]{Y}). More precisely, there is a natural Hodge isometry $$\kappa:\co^2(M(S,v,B),\ZZ)\to v^\perp,$$ where the Hodge structure on $v^\perp$ is the one induced by $T(S,B)\subseteq v^\perp$. In particular, we have $\kappa(T(M(S,v,B)))=T(S,B)$.

Denote by $\Lambda_0\hookrightarrow\Lambda:=U^{\oplus 3}\oplus E_8(-1)^{\oplus 2}$ the orthogonal complement of a (primitive) vector $k$ in the K3 lattice $\Lambda$ with $(k,k)=2$. Notice that we can choose any such $k$ as they are all conjugate under the action of the group $\mathrm{O}(\Lambda)$ of isometries of $\Lambda$ (this is essentially because, due to \cite{Ni}, the primitive embedding of a rank-$1$ lattice in $\Lambda$ is unique, up to the action of $\mathrm{O}(\Lambda)$). Hence, there is an isometry $\eta:\co^2(S,\ZZ)\to\Lambda$ such that $\eta((f^*h)^\perp)=\Lambda_0$.
If we define $\widetilde\Lambda:=\Lambda\oplus U$, then there exists an isometry $\widetilde\eta:\widetilde\co(S,\ZZ)\to\widetilde\Lambda$ such that $\widetilde\eta|_{\co^2(S,\ZZ)}=\eta$ and $w:=\widetilde\eta(v)$ can be written as $w=(0,k,0)$ as a vector in $\widetilde\Lambda$. Here the trivial coordinates refer to the vectors $e_1$ and $e_2$ in the standard basis of the additional copy of $U$.

Let us now define two primitive embeddings $i_1:L_1(-1)\hookrightarrow w^\perp$ and $i_2: L_0(-1)\hookrightarrow w^\perp$ as follows. Take the isometry $\phi:=\widetilde\eta\circ\kappa\circ(\gamma^*)^{-1}:\co^2(F(Y'),\ZZ)\to w^\perp$ and put $$i_1:=\phi\circ(\psi'')^{-1}:L_1(-1)\hookrightarrow w^\perp\qquad\mbox{and}\qquad i_2:=\phi\circ(\psi')^{-1}:L_0(-1)\hookrightarrow w^\perp.$$ It is clear that $i_1(L_1(-1))\subseteq i_2(L_0(-1))\subseteq w^\perp$.

\begin{lem}\label{lem:FanoK3}
	Let $Y$ be a cubic fourfold with a plane. Then there is an isometry $\phi:\co^2(F(Y),\ZZ)\to w^\perp$ such that $\phi(T(F(Y)))\subseteq i_1(L_1(-1))\subseteq w^\perp$.
\end{lem}

\begin{proof}
	By the above discussion there is already a very generic cubic fourfold $Y'$ for which such a $\phi$ exists. Using this case and since all Fano varieties $F(Y)$ of cubic fourfolds $Y$ containing a plane belong to the same irreducible component of the moduli space of (primitively polarized) irreducible holomorphic symplectic manifolds, one gets an isometry $\phi':\co^2(F(Y),\ZZ)\to w^\perp$ such that $\phi'(\co^2(F(Y),\ZZ)_\mathrm{prim})=i_2(L_0(-1))\subseteq w^\perp$, for all cubic fourfolds $Y$ with a plane. This follows, for example, from \cite[Thm.\ 1.5]{HGS} (see also \cite{vG1}).
	
	By Lemma \ref{lem:embprim}, there is a isometry $\xi_1\in\mathrm{O}(L_0(-1))$ such that $\xi_1(\phi'(T(F(Y))))\subseteq i_1(L_1(-1))$. Moreover $\xi_1$ acts trivially on the discriminant group $A_{L_0(-1)}:=L_0(-1)\dual/L_0(-1)$ and so, by Sections 1.14 and 1.15  of \cite{Ni}, there is $\xi_2\in\mathrm{O}(w^\perp)$ such that $\xi_2|_{i_2(L_0(-1))}=\xi_1$. Thus define $\phi=\xi_2\circ\phi'$.
\end{proof}

\subsubsection{The end of the proof of Theorem \ref{thm:main2}}\label{subsubsec:end}

Let us begin with the following result.

\begin{lem}\label{lem:FanoK3v2}
	Let $Y$ be a cubic fourfold with a plane. Then there exist a K3 surface $S'$, a Mukai vector $v'\in\widetilde\co(S',\ZZ)$ and $B'\in\co^2(S',\QQ)$ such that there is a Hodge isometry $\co^2(F(Y),\ZZ)\cong\co^2(M(S',v',B'),\ZZ)$.
\end{lem}

\begin{proof}
	Using Lemma \ref{lem:FanoK3}, fix an isometry $\phi:\co^2(F(Y),\ZZ)\to w^\perp$ such that $\phi(T(F(Y)))\subseteq i_1(L_1(-1))$. Put $\sigma_1:=\phi_\CC(\sigma_{F(Y)})\in i_1(L_1(-1))\otimes\CC$, where $\phi_\CC$ is the $\CC$-linear extension of $\phi$, and set $\sigma_2$ to be the component of $\sigma_1$ contained in $\Lambda\otimes\CC$. Notice that, as $\sigma_1\in w^\perp\otimes\CC$, the vector $\sigma_2$ is orthogonal to $k\in\Lambda$ and therefore $\sigma_2\in\Lambda_0\otimes\CC\subseteq\Lambda\otimes\CC$.
	
	By construction $\sigma_2$ induces a weight-$2$ Hodge structure on $\Lambda_0$ and, by the surjectivity of the period map for K3 surfaces, there is a K3 surface $S'$ and an isometry $\widetilde\eta':\widetilde\co(S',\ZZ)\to\widetilde\Lambda$ as explained in Section \ref{subsubsec:K3s} (indeed, to define $\widetilde\eta$ for the K3 surface $S$, we did not use that $\Pic(S)\cong\ZZ$). In particular, $\widetilde\eta'(T(S'))$ is a primitive sublattice of $\Lambda_0$.
	
We then have a commutative diagram
	\begin{equation}\label{eqn:diagr}
	\xymatrix{
	T(S')\ar@{^{(}->}[d]_{\widetilde\eta'|_{T(S')}}\ar[r]^{\langle-,B'\rangle}&\ZZ/2\ZZ\ar@{=}[d]&\\
	\Lambda_0\ar[r]^-{(-,\widetilde{B})}&\ZZ/2\ZZ\ar[r]&0,
	}
	\end{equation}
where $(-,-)$ denotes the bilinear pairing on $\Lambda$ and $\langle-,-\rangle$ is the Mukai pairing. Notice that the bottom line is obtained by applying $\widetilde\eta$ to the surjection
	\begin{equation}\label{eqn:l1l1}
\xymatrix{
T(S)\ar[r]^-{\langle-,B\rangle}&\ZZ/2\ZZ\ar[r]&0,
}
\end{equation}
whose existence is due to \cite{C}, being $B$ a B-field lift of $\beta\in\Br(S)$. In particular, in \eqref{eqn:diagr}, we put $\widetilde{B}:=(\widetilde\eta_\QQ)(B)$ and $B':=(\widetilde\eta'_\QQ)^{-1}(\widetilde{B})$.

Set $v':=(\widetilde\eta')^{-1}(w)$ and consider the moduli space $M(S',v',B')$ with the natural identification $\kappa':\co^2(M(S',v',B'))\isomor (v')^\perp$ as discussed in Section \ref{subsubsec:K3s}.
By \cite[Thm.\ 3.19]{Y}, there is $c\in\CC$ such that $(\widetilde\eta'\circ\kappa')_\CC(c\sigma_{M(S',v',B')})=\widetilde\eta'_\CC(\sigma_{S'}+\sigma_{S'}\wedge B')=\sigma_2+\sigma_2\wedge\widetilde{B}=\sigma_1$.

Thus the isometry $(\kappa')^{-1}\circ(\widetilde\eta')^{-1}\circ\phi:\co^2(F(Y),\ZZ)\isomor\co^2(M(S',v',B'),\ZZ)$ preserves the Hodge structures. This is precisely what we want.
\end{proof}

To conclude the proof of Theorem \ref{thm:main2}, observe that the Fano varieties $F(Y)$ of cubic fourfolds $Y$ containing a plane and the moduli spaces of twisted sheaves on a K3 surface are both deformations of Hilbert schemes of $0$-dimensional subschemes of length-$2$ on a K3 surface (see \cite{BD} and \cite{Y}). Thus we can apply the birational Torelli theorem for those manifolds. More precisely, given a cubic fourfold $Y$ containing a plane $P$ and the Hodge isometry $\co^2(F(Y),\ZZ)\cong\co^2(M(S',v',B'),\ZZ)$ as in Lemma \ref{lem:FanoK3v2}, by the main result in \cite{Ve} (see also \cite{HuyV}), we get a birational map $F(Y)\dashrightarrow M(S',v',B')$ as required. Notice that here the monodromy calculations in \cite{M1,M2} are crucial.

\begin{remark}\label{rmk:referee}
As suggested by one of the referees, one may give a different geometric argument to construct the family of K3 surfaces associated to the family of cubic fourfolds with a plane. Indeed, take a family $\ky\to\kd$ of cubic fourfolds containing a plane. Let $\pi:\widetilde\ky\to\PP^2_\kd$ be the map induced by the projection from $P_\kd$ onto $\PP_\kd^2$, blown-up in $P_\kd$. Note that the Stein factorization of the relative Hilbert scheme of lines of $\widetilde\ky\to\PP^2_\kd$ gives a family $\ks_0\to\kd$ of K3 surfaces over $\kd$ with singular fibres over the points of $\kd$ corresponding to nodal sextics.

Passing to a double cover of $\kd$, one gets a family $\ks'$ with a locus of ordinary double points along the set of singular points in the singular fibres (i.e., in codimension $3$). Therefore, choosing a small analytic resolution of singularities $\ks''\to\ks'$ we get a family $\ks''\to\kd'$ of smooth K3 surfaces. The fibres of this family are double covers of $\PP^2$ blown-up in the nodes of the sextic.
\end{remark}

\section{Rationality and moduli spaces}\label{subsec:bir3}

In this section we relate the fact that $F(Y)$ is birational to a moduli spaces of untwisted sheaves to the existence of exact equivalences between the category $\cat{T}_Y$ and the bounded derived category of untwisted sheaves on some K3 surface. According to Kuznetsov's conjecture mentioned in the introduction, this would provide a comparison between two points of view on rationality of a cubic fourfold with a plane: one given by looking at the birational geometry $F(Y)$ and the other at the category $\cat{T}_Y$.

\smallskip

We start with the following result that is certainly well-known to the experts.

\begin{prop}\label{prop:nospmod}
	If $Y$ is a very generic cubic fourfold containing a plane $P$, then there is no smooth projective K3 surface $S'$ and no primitive Mukai vector $v'$ such that $F(Y)$ is birational to the smooth projective moduli space of stable untwisted sheaves on $S'$ with Mukai vector $v'$.
\end{prop}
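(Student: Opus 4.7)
The plan is to derive a contradiction by computing the discriminant form of the transcendental lattice $T(F(Y))$ and comparing it with the possibilities available to the transcendental lattice of a smooth projective K3 surface of Picard rank one. Suppose that $F(Y)$ is birational to a fine moduli space $M:=M(S',v)$ of stable untwisted sheaves on a smooth projective K3 surface $S'$, for some primitive Mukai vector $v$. Since two birational irreducible holomorphic symplectic manifolds have Hodge-isometric second integral cohomology (see \cite[Thm.\ 4.7]{Huy2}), combining this with the Mukai--Yoshioka isomorphism $H^2(M,\ZZ)\cong v^{\perp}\subseteq\Htil(S',\ZZ)$ and restricting to transcendental parts yields a Hodge isometry $T(F(Y))\cong T(S')$. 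The task is therefore to show that for very generic $Y$ no such $S'$ can exist, and we shall do this purely at the level of the underlying lattices.

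First I would compute $T(F(Y))$ concretely. By the Beauville--Donagi isometry \eqref{eqn:fano1} and the very generic assumption $\NS_2(Y)=\langle H^2,P\rangle$, the lattice $T(F(Y))$ is isomorphic to $\langle H^2,P\rangle^{\perp}(-1)$ taken inside the unimodular lattice $H^4(Y,\ZZ)$. The Gram matrix of $\langle H^2,P\rangle$ is $\bigl(\begin{smallmatrix}3&1\\1&3\end{smallmatrix}\bigr)$, which has discriminant $8$, and an explicit dual-basis computation shows that its discriminant form is the cyclic group $\ZZ/8\ZZ$ equipped with the quadratic form sending a generator to $3/8\bmod 2\ZZ$. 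By \cite[Prop.\ 1.6.1]{Ni} the discriminant form of the orthogonal complement in the unimodular lattice $H^4(Y,\ZZ)$ is the negative of this, and the additional $(-1)$-twist negates it once more, so $T(F(Y))$ carries the discriminant form $(\ZZ/8\ZZ,\ 3/8)$. On the other hand, since $T(F(Y))$ has rank $21$, the Picard lattice $\NS(S')$ must have rank one, generated by an ample class $h'$ with $(h')^{2}=2n$; matching discriminant groups forces $2n=8$, and the discriminant form of $T(S')$ becomes $(\ZZ/8\ZZ,\ -1/8\bmod 2\ZZ)=(\ZZ/8\ZZ,\ 15/8)$.

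The main (elementary but essential) step is then to show that the two finite quadratic forms $(\ZZ/8\ZZ,\ 3/8)$ and $(\ZZ/8\ZZ,\ 15/8)$ are not isomorphic. Any automorphism of $\ZZ/8\ZZ$ is multiplication by a unit $u\in(\ZZ/8\ZZ)^{\ast}=\{1,3,5,7\}$, so such an isomorphism would require $3u^{2}\equiv 15\pmod{16}$; however $u^{2}\equiv 1$ or $9\pmod{16}$ for every such $u$, and hence $3u^{2}\in\{3,11\}\pmod{16}$, never $15$. This contradicts $T(F(Y))\cong T(S')$ and proves the proposition. Heuristically, the obstruction is the lattice-theoretic shadow of Theorem~\ref{thm:main1}: the correct K3 side of $F(Y)$ carries an essential non-trivial Brauer twist $\beta$, and the discriminant form $(\ZZ/8\ZZ,\ 3/8)$ is incompatible with being realised by the transcendental lattice of any untwisted K3 surface.
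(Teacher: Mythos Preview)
Your argument is correct and essentially self-contained, whereas the paper's proof outsources the key lattice-theoretic step. Both proofs begin the same way: a birational isomorphism $F(Y)\dashrightarrow M(S',v)$ would force a Hodge isometry $T(F(Y))\cong T(S')$, and hence a primitive embedding of $T(F(Y))$ into the K3 lattice $\Lambda$. At this point the paper simply invokes the exact sequence $0\to T(F(Y))\to T(S)\to\ZZ/2\ZZ\to 0$ coming from the non-trivial Brauer class and then cites \cite[Cor.\ 9.4 and Sect.\ 9.7]{vG} (or the appendix of \cite{Kuz3}) for the statement that this kernel admits no primitive embedding into $\Lambda$.

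Your route instead computes the obstruction directly: from the Gram matrix $\bigl(\begin{smallmatrix}3&1\\1&3\end{smallmatrix}\bigr)$ of $\langle H^2,P\rangle$ you extract the discriminant form $(\ZZ/8\ZZ,\,3/8)$ of $T(F(Y))$, observe that any candidate $S'$ must have $\NS(S')=\langle 8\rangle$, and then check by hand that the resulting discriminant form $(\ZZ/8\ZZ,\,-1/8)$ is not isomorphic to $(\ZZ/8\ZZ,\,3/8)$ because $u^2\in\{1,9\}\pmod{16}$ for every odd $u$. This is exactly the computation underlying the cited results, specialised to the case at hand, and it has the advantage of making the obstruction completely explicit without appealing to outside sources. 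The paper's formulation, on the other hand, packages the same obstruction more conceptually as ``$T(F(Y))$ does not primitively embed in the K3 lattice'', which connects more transparently with the index-$2$ sublattice picture $T(F(Y))\subset T(S)$ and with the role of the non-trivial Brauer class $\beta$.
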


\begin{proof}
Assume, by contradiction, that there exists a K3 surface $S'$ and a primitve Mukai vector $v\in\widetilde \co(S',\ZZ):=\widetilde \co(S',0,\ZZ)$ such that $F(Y)$ is birational to the moduli space $M(S',v)$ of stable untwisted sheaves with Mukai vector $v$. Then there would be a Hodge isometry $T(F(Y))\cong T(M(S',\ZZ))\cong T(S')$ (see \cite{Y}). Hence $T(F(Y))$ would admit a primitive embedding in the K3 lattice $\Lambda$.

Now, due to \eqref{eqn:FanoK3}, \eqref{eqn:l1l1} and \cite{V}, we have a short exact sequence
\begin{equation}\label{eqn:fano2}
\xymatrix{
0\ar[r]&T(F(Y))\ar[r]&T(S)\ar[r]^{\langle-,B\rangle}&\ZZ/2\ZZ\ar[r]&0,
}
\end{equation}
where $S$ is again the K3 surface double cover of $\PP^2$ and $B$ is the lift of the special non-trivial $\beta\in\Br(S)$ in Section \ref{subsec:geometry}. But, due to either \cite[Cor.\ 9.4 and Sect.\ 9.7]{vG} or to the appendix of \cite{Kuz3}, the kernel of the morphism $(-,B)$ in \eqref{eqn:fano2} does not admit a primitive embedding in $\Lambda$.
\end{proof}

Nevertheless $F(Y)$ may be birational to a moduli space of untwisted sheaves.
As explained in the following proposition, which answers a question by Hassett in the case of cubic fourfolds containing a plane, this is reflected by the category $\cat{T}_Y$.

\begin{thm}\label{prop:Hassett}
Let $Y$ be a cubic fourfold satisfying $(\ast)$. There exists a K3 surface $S'$ such that  $F(Y)$ is birational to a fine moduli space $M(S',v)$ of untwisted stable sheaves on $S'$ and with Mukai vector $v$ if and only if $\cat{T}_Y\cong\Db(S')$.
\end{thm}

\begin{proof}
By assumption $\cat{T}_Y\cong\Db(S,\beta)$, where $S$ and $\beta\in\Br(S)$ are as the previous sections.
Under the assumption that $F(Y)$ and $M(S',v)$ are birational, there exists a Hodge isometry $T(F(Y))=\Pic(F(Y))^\perp\cong T(M(S',v)):=\Pic(M(S',v))^\perp$, where the orthogonal is taken with respect to the Beauville--Bogomolov form on the second cohomology group. Moreover, there are Hodge isometries $T(M(S',v))\cong T(S')$ (see, for example, \cite{Y} or the beginning of Section \ref{subsec:bir1}) and, as a consequence of Proposition \ref{prop:bir1}, $T(F(Y))\cong T(S,\beta)$. Putting the isometries together, we get a Hodge isometry $T(S')\cong T(S,\beta)$.

By \cite[Thm.\ 1.14.4]{Ni}, this isometry extends to a Hodge isometry $\varphi:\widetilde \co(S',\ZZ)\cong\widetilde \co(S,B,\ZZ)$, where $B$ is a $B$-field lift of $\beta$. Hence, up to composing $\varphi$ with the isometry $-\id_{\co^0(S',\ZZ)\oplus \co^4(S',\ZZ)}\oplus \id_{\co^2(S',\ZZ)}$, we can apply the main result in \cite{HS2} obtaining an equivalence $\Db(S')\cong\Db(S,\beta)$. In conclusion, $\Db(S')\cong\cat{T}_Y$.

Conversely, assume that there exist a K3 surface $S'$ and an exact equivalence $\Db(S')\cong\cat{T}_Y$. Due to $(\ast)$, there is a an equivalence $\Phi:\Db(S,\beta)\isomor\Db(S')$ which, in view of \cite{CS}, is a Fourier--Mukai functor $\Phi_\ke$. By \cite{HS1}, this equivalence induces a Hodge isometry $\Phi_\ke^H:\widetilde \co(S,B,\ZZ)\to\widetilde \co(S',\ZZ)$ for a $B$-field lift $B$ of $\beta$.

Due to Proposition \ref{prop:bir1}, $F(Y)$ is birational to a moduli space $M(S,v,B)$, for a primitive vector $v\in\widetilde \co(S,B,\ZZ)$. Set $v':=\Phi^H_\ke(v)$. Up to composing $\Phi_\ke$ with the shift by $1$, we can assume that the degree zero part of $v'$ is greater or equal to $0$ and so we can consider the (non-empty) moduli space $M(S',v')$ of stable untwisted sheaves on $S'$. By \cite{Y}, there are Hodge isometries
\[
\co^2(M(S,v,B),\ZZ)\cong v^\perp\cong (v')^\perp\cong \co^2(M(S',v'),\ZZ).
\]
Notice that both $M(S,v,B)$ and $M(S',v')$ are deformation equivalent to a Hilbert scheme of length-$2$ subschemes of dimension $0$ on a K3 surface (\cite{Y}). Hence, applying the Torelli theorem for hyperk\"{a}hler manifolds (see \cite{Ve} and \cite{HuyV}), we get a birational map $M(S,v,B)\dashrightarrow M(S',v')$. Putting all together, we get a birational map $F(Y)\dashrightarrow M(S',v')$, as required.
\end{proof}

\begin{remark}\label{rmk:Hassett2} (i) A special but interesting case of the previous proposition occurs if $Y$ satisfies $(\ast)$ and the moduli space is actually isomorphic to $\mathrm{Hilb}^2(S')$, the Hilbert scheme of length-$2$ $0$-dimensional subschemes of a K3 surface $S'$. The main conjecture in \cite{Kuz3} asserts that a cubic fourfold with a plane $Y$ is rational if and only if there exists a K3 surface $S'$ and an equivalence $\cat{T}_Y\cong\Db(S')$. Thus, conjecturally, if $F(Y)$ is birational to a Hilbert scheme as above, then $Y$ is rational.
	
It would be certainly interesting to remove the assumption $(\ast)$ but this would require a different approach exceeding the scope of this paper.
	
(ii) Assuming $\rk(\NS_2(Y))>12$ and using \cite{Ve}, the statement of Theorem \ref{prop:Hassett} can be extended further. Indeed, due to our assumption on the rank of $\NS_2(Y)$, by \cite[Prop.\ 7.3]{HS1} there exist a K3 surface $S'$ and an equivalence $\cat{T}_Y\cong\Db(S')$. Again, conjecturally, this would mean that all cubic fourfolds with a plane and such that $\rk(\NS_2(Y))>12$ are rational.

From \cite[Thm.\ 0.4]{HS1} we deduce the sequence of Hodge isometries $T(F(Y))\cong T(S,B)\cong T(S')$. By \cite{Be} there is a Hodge isometry $T(S')\cong T(\mathrm{Hilb}^2(S'))$ and such an isometry extends to a Hodge isometry $\co^2(F(Y),\ZZ)\cong \co^2(\mathrm{Hilb}^2(S'),\ZZ)$. For this use \cite{Ni} with our assumption on the rank of $T(Y)$ (and so of $T(S')$) and the fact that $\co^2(F(Y),\ZZ)$ and $\co^2(\mathrm{Hilb}^2(S'),\ZZ)$ contain a copy of the K3 lattice $\Lambda$ into which $T(F(Y))$ and $T(S')$ embed (for this use \cite{Be}).

Using the main result of \cite{Ve} (see also \cite{HuyV}), the Hodge isometry $\co^2(F(Y),\ZZ)\cong \co^2(\mathrm{Hilb}^2(S'),\ZZ)$ would yield a birational map between $F(Y)$ and $\mathrm{Hilb}^2(S')$.
\end{remark}

\section{Isomorphisms and moduli spaces of complexes}\label{sec:iso}

In the course of this section, unless clearly specified, we will always assume that all cubic fourfolds $Y$ have the restrictive property
\begin{itemize}
\item[($\ast\ast$)] $Y$ satisfies $(\ast)$ and $\beta\in\Br(S)$ is non-trivial.
\end{itemize}
As observed in the appendix to \cite{Kuz3}, the very generic cubic fourfolds containing a plane satisfy the additional condition about $\beta$. Moreover, as it follows from \cite[Prop.\ 4.7]{Kuz3}, the cubic fourfolds for which $(\ast\ast)$ does not necessarily hold true are precisely those in the codimension-$1$ subvarieties of the divisor $\kc_8$ consisting of rational cubic fourfolds with a plane studied in \cite{Ha1}.

Although relative to a different setting, most of the arguments here are inspired by \cite{AB}.

\subsection{A family of stability conditions}\label{subsec:definigstab}

Given the twisted Chern character $\ch^B$, for any torsion-free $\kf\in\coh(S,\beta)$ define its \emph{slope} in the following way:
\[
\mu^B(\kf):=\frac{c_1^B(\kf)\cdot f^*h}{\rk(\kf)}.
\]
In particular a torsion-free $\kf\in\coh(S,\beta)$ is \emph{$\mu^B$-semistable} (resp.\ \emph{$\mu^B$-stable}) if, for all non-zero $\kg\hookrightarrow\kf$ in $\coh(S,\beta)$, $\mu^B(\kg)\leq\mu^B(\kf)$ (resp.\ $<$ and $\rk(\kg)<\rk(\kf)$). Notice that, in the definition of slope, the ample polarization has been fixed to be equal to $f^*h$.

The following result could be proved for cubic fourfolds satisfying $(\ast)$ instead of $(\ast\ast)$. Since we will not need this generality, we just consider $(\ast\ast)$ for which the proof is straightforward.

\begin{lem}\label{prop:stab}
	The sheaves $\e$ and $\f$ are $\mu^B$-stable. Moreover, any morphism $\e\to\f$ is injective.
\end{lem}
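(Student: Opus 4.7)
The plan is to exploit the fact that, under $(\ast\ast)$, $\beta$ is a nontrivial class of order exactly $2$ in $\Br(S)$, which forbids the existence of any rank-$1$ torsion-free $\beta$-twisted sheaf on $S$. Indeed, if $\kg$ were such a sheaf, its determinant (formed on the locally free locus of $\kg$, whose complement has codimension $\geq 2$ on the smooth surface $S$, and then extended) would be a $\beta$-twisted line bundle; but the existence of a $\beta$-twisted line bundle on $S$ is equivalent to $\beta=0$ in $\Br(S)$, contradicting $(\ast\ast)$. The same determinant argument shows more generally that every torsion-free $\beta$-twisted sheaf on $S$ must have even rank.

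With this observation, the $\mu^B$-stability of $\e$ (and, by the identical argument, of $\f$) is essentially automatic. By Lemma \ref{lem:calc}, both $\e$ and $\f$ are torsion-free of rank $2$. A nonzero proper subsheaf $\kg \hookrightarrow \e$ in $\coh(S,\beta)$ is itself torsion-free, hence has rank $1$ or $2$: rank $1$ is excluded by the previous paragraph, while rank $2$ forces $\e/\kg$ to be a torsion-free sheaf of rank $0$, i.e.\ zero, contradicting properness. So $\e$ has no destabilizing subsheaf at all, and the defining inequality of $\mu^B$-stability holds vacuously.

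For the injectivity of any nonzero morphism $\phi:\e\to\f$, consider $\ker(\phi)\subseteq\e$, which is torsion-free of rank $0$, $1$, or $2$. Rank $1$ is again forbidden by the first paragraph; rank $2$ would make $\im(\phi)$ a torsion subsheaf of the torsion-free $\f$, hence zero, contradicting $\phi\neq 0$. Therefore $\ker(\phi)$ is torsion-free of rank $0$, i.e.\ the zero sheaf, and $\phi$ is injective. The main (and essentially only) substantive ingredient of the whole argument is the rank-divisibility statement for torsion-free twisted sheaves relative to a nontrivial $2$-torsion Brauer class, which is where the determinant-and-extension argument is used; everything else is rank bookkeeping that relies only on the torsion-freeness of $\e$ and $\f$.
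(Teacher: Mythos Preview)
Your approach is essentially the paper's: both deduce stability from the fact that, when $\beta$ has order exactly $2$, there are no rank-$1$ torsion-free $\beta$-twisted sheaves. The paper reaches this via Lemma~\ref{lem:NS} (every Mukai vector in $\Pic(S,B)$ has even degree-$0$ part), while you give the equivalent determinant argument directly; for injectivity the paper also points to \cite[Lemma~2.5]{Kuz2} as an alternative.

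There is one slip. In the stability paragraph you claim that a rank-$2$ subsheaf $\kg\subseteq\e$ forces $\e/\kg$ to be ``torsion-free of rank $0$, i.e.\ zero''. This is false: the quotient $\e/\kg$ is a \emph{torsion} sheaf and can well be nonzero (e.g.\ take $\kg=\e\otimes\ki_Z$ for a nonempty finite subscheme $Z\subset S$, or $\kg=\e\otimes\ko_S(-D)$ for an effective divisor $D$). So $\e$ certainly has proper nonzero subsheaves. The fix is trivial, and your final sentence already gestures at it: for $\mu^B$-stability one only tests subsheaves of strictly smaller rank, so once rank-$1$ subsheaves are excluded there is nothing left to check and the inequality holds vacuously. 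Just drop the erroneous clause about rank-$2$ subsheaves. The injectivity argument is correct as written.
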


\begin{proof}
Since, by assumption, $\beta$ is non-trivial, the two claims follow directly from Corollary \ref{cor:rankdiv}.
\end{proof}

\smallskip

As we observed in \cite{HMS}, the notion of twisted Chern character allows one to generalize Bridgeland's construction in \cite{Br1} to the case of twisted K3 surfaces.

We skip all the details about the definition of stability conditions which are not relevant in this paper and for which we refer to \cite{Br,HMS}. So we just recall that, by \cite[Prop.\ 5.3]{Br} giving a stability condition on a triangulated category $\cat{D}$ is equivalent to giving a bounded $t$-structure on $\cat{D}$ with heart $\cat{A}$ and a group homomorphism $Z:K(\cat{A})\to\CC$ such that $Z(\kg)\in\HH$, for all $0\neq \kg\in\cat{A}$, and with Harder--Narasimhan filtrations (see \cite[Sect.\ 5.2]{Br}). Here $\HH:=\{ z\in\CC^*:z=|z|\exp(i\pi\phi), \, 0< \phi \leq 1 \}$. More precisely, any $0\neq\kg\in\cat{A}$ has a well-defined \emph{phase} $\phi(\kg):=\mathrm{arg}(Z(\kg))\in(0,1]$.
For $\phi\in(0,1]$, we denote by $\kp(\phi)$ the category of $\phi$-semistable objects in $\cat{A}$ of phase $\phi$: more precisely, an object $0\neq\kg\in\cat{A}$ is then in $\kp(\phi)$ if and only if, for all $\kg\epi\kg'\neq 0$ in $\cat{A}$, $\phi=\phi(\kg)\leq\phi(\kg')$. If $\phi\in\RR$, then there is a unique $n\in\ZZ$ and $\phi'\in(0,1]$ such that $\phi=\phi'+n$. So set $\kp(\phi):=\kp(\phi')[n]$.

A stability condition is called \emph{locally-finite} if there
exists some $\epsilon > 0$ such that, for all $\phi\in\RR$, each
(quasi-abelian) subcategory  $\mc{P} ((\phi - \epsilon , \phi +
\epsilon))$ (i.e., the category generated by extensions by all semistable objects with phases in the interval $(\phi-\epsilon,\phi+\epsilon)$) is of finite length.
In this case $\mc{P} (\phi)$ has finite length so that every object in $\mc{P} (\phi)$ has a finite
\emph{Jordan--H\"older filtration} into stable factors of the same phase.

\smallskip

Assume from now on that $Y$ is a cubic fourfold containing a plane $P$ and satisfying $(\ast\ast)$. We want to produce a family of stability conditions on the derived category $\Db(S,\beta)$, where $(S,\beta)$ is the twisted K3 surface associated to $Y$. Observe that, by the definition of a lift of a Brauer class, we can assume further that the $B$-field lift of $\beta$ is such that $B\cdot f^*h=\frac{1}{2}$ (see \cite[Lemma 6.4]{Kuz3}). By Lemma \ref{lem:calc}, we can write $v^B(\e)=(2,\ell+2B,s_0)\in\widetilde \co(S,B,\ZZ)$. Define
\[
Z_{m}:\Pic(S,B)\longrightarrow\CC\qquad v\longmapsto\left\langle\exp\left(\frac{\ell}{2}+\left(\frac{1}{4}+m\sqrt{-1}\right)f^*h+B\right),v\right\rangle,
\]
where $\langle-,-\rangle$ denotes the Mukai pairing and $m\in\RR_{>0}$.
Explicitly, if $v=(r,c,d)$, we have
\begin{equation}\label{eqn:stabfun}
Z_m(v)=\left( -d+U\cdot c -rU^2/2 + rm^2\right) + m\sqrt{-1}\left(c-rU \right)\cdot f^*h,
\end{equation}
where $U:=\ell/2+f^*h/4+B$.
If $r\neq0$, we can also write the real part as
\[
\Re Z_m(v) = \frac 1{2r}\left( \langle v,v\rangle + 2m^2r^2 - \left(c-rU \right)^2\right).
\]

\begin{remark}\label{rmk:realimstab}
Notice that, for any line $l\subseteq Y$, we have
\begin{align*}
& \mathrm{Re}(Z_m(v^B(\kj_l)))=\mathrm{Re}(Z_m((0,f^*h,s)))=0\\
& \mathrm{Im}(Z_m(v^B(\kj_l)))=2m.
\end{align*}
Indeed, by applying the equivalence $\Xi^{-1}$ to the triangle in Proposition \ref{prop:ideallines}, we get that $\kj_l$ is an extension of $\ke_0[1]$ and $\ke_1$:
\begin{equation}\label{eq:Columbus30}
\ke_0[1] \longrightarrow \kj_l \longrightarrow \ke_1.
\end{equation}
By Remark \ref{rmk:spherical}, $\e$ and $\f$ are both spherical, namely, for $j=0,1$, we have $\langle v^B(\ke_j),v^B(\ke_j)\rangle=-2$.
Therefore, for $j=0,1$,
\begin{align*}
& \mathrm{Re}(Z_{m}(v^B(\ke_j)))=\frac{1}{4}\left(8m^2-\frac{5}{2}\right)\\
& \mathrm{Im}(Z_m(v^B(\ke_j)))=(-1)^{j+1}m.
\end{align*}
Let us spell out this explicit calculation for $j=0$, as the case of $\ke_1$ is completely analogous. By Lemma \ref{lem:calc}, we have
\[
v^B(\ke_0)=(2,\ell+2B,s_0).
\]
Thus set $c=\ell+2B$, $r=2$ and, as above, $U=\frac{1}{2}\ell+\frac{1}{4}f^*h+B$. Hence, we have $(c-rU)=\ell+2B-2\left(\frac{1}{2}\ell+\frac{1}{4}f^*h+B\right) =-\frac{1}{2}f^*h$. In particular, we have $(c-rU)^2=\frac{1}{2}$ and $(c-rU)\cdot f^*h=-1$. Plugging these in \eqref{eqn:stabfun}, we get
\begin{equation*}
\begin{split}
Z_m(v^B(\ke_0))&=\frac 1{2r}\left( \langle v^B(\ke_0),v^B(\ke_0)\rangle + 2m^2r^2 - \left(c-rU \right)^2\right) + m\sqrt{-1}\left(c-rU \right)\cdot f^*h\\
&=\frac 14 \left(-2+8m^2 -\frac 12 \right) - m\sqrt{-1} = \frac 14\left( 8m^2 -\frac 52\right) -  m\sqrt{-1}.
\end{split}
\end{equation*}

We also observe that, for $m_0:=\frac{\sqrt{5}}{4}$, $\mathrm{Re}(Z_{m_0}(v^B(\ke_j)))=0$.
\end{remark}

To define an abelian category which is the heart of a bounded $t$-structure on $\Db(S,\beta)$, fix the rational number
\[
\mu:=\left(B+\frac{\ell}{2}+\frac{1}{4}f^*h\right)\cdot f^*h.
\]
By the previous remark, we have that $\mu=s$, where $s$ is the degree-$4$ part of $v^B(\kj_l)$.

Let $\cat{T},\cat{F}\subseteq\coh(S,\beta)$ be the following two full additive
subcategories: The non-trivial objects in $\cat{T}$ are the twisted
sheaves $\ka$ such that their torsion-free part have Harder--Narasimhan factors (with respect to $\mu^B$-stability) of slope $\mu^B>\mu$.
A non-trivial twisted sheaf $\ka$ is an object in $\cat{F}$ if $\ka$ is torsion-free and every $\mu^B$-semistable Harder--Narasimhan factor of $\ka$  has slope $\mu^B\leq\mu$.
It is easy to see that $(\cat{T},\cat{F})$ is a torsion theory (see \cite{HRS} for more details).

Following \cite{Br1}, we define the heart of the induced $t$-structure as the abelian category
\[
\cat{A}:=\left\{\ka\in\Db(S,\beta):\begin{array}{l}
\bullet\;\;\kh^i(\ka)=0\mbox{ for }i\not\in\{-1,0\},\\\bullet\;\;
\kh^{-1}(\ka)\in\cat{F},\\\bullet\;\;\kh^0(\ka)\in\cat{T}\end{array}\right\}.
\]

\begin{remark}\label{rmk:objstab}
A direct computation shows that
\[
\mu^B(\e)=\frac{(\ell+2B)\cdot f^*h}{2}<\mu<\frac{(\ell+f^*h+2B)\cdot f^*h}{2}=\mu^B(\f).
\]
Being $\e$ and $\f$ $\mu^B$-stable (Lemma \ref{prop:stab}), we have $\e[1],\f\in\cat{A}$.
Hence, by \eqref{eq:Columbus30}, $\kj_l\in\cat{A}$, when $l\subseteq P$.
On the other side, if $l\not\subseteq P$, then $\kj_l$ is in $\cat{T}\subseteq\cat{A}$, being a torsion sheaf.
Summing up, $\kj_l\in\cat{A}$, for all lines $l\subseteq Y$.
\end{remark}


\begin{lem}\label{lem:exstab}
The pair $\sigma_m:=(Z_m,\cat{A})$ defines a locally-finite stability condition for all $m>1/2$.
\end{lem}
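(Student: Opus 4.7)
The plan is to verify the three conditions for $\sigma_m=(Z_m,\cat{A})$ to define a locally-finite stability condition on $\Db(S,\beta)$, following \cite[Prop.\ 5.3]{Br}: (i) $Z_m$ is a stability function on $\cat{A}$, i.e.\ $Z_m(\ka)\in\HH$ for every $0\neq\ka\in\cat{A}$; (ii) every object of $\cat{A}$ admits a Harder--Narasimhan filtration with respect to $Z_m$; (iii) the resulting stability condition is locally finite. The overall strategy is parallel to Bridgeland's construction of stability conditions on the derived category of a K3 surface \cite{Br1}, in the twisted version worked out in \cite{HMS}.

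Expanding the exponential in the definition of $Z_m$ and using the Mukai pairing, one finds for $v=(r,c,s)\in\Pic(S,B)$ the explicit formulas
\begin{equation*}
\mathrm{Im}\,Z_m(v)=m\bigl(f^*h\cdot c-r\mu\bigr),\qquad \mathrm{Re}\,Z_m(v)=D\cdot c-s-\tfrac{r}{2}D^2+rm^2,
\end{equation*}
where $D:=B+K/2+f^*h/4$ satisfies $\mu=D\cdot f^*h$. Writing $\ka\in\cat{A}$ as an extension of $\kh^0(\ka)\in\cat{T}$ by $\kh^{-1}(\ka)[1]$ with $\kh^{-1}(\ka)\in\cat{F}$, the defining inequalities of the torsion pair give $\mathrm{Im}\,Z_m(\ka)\geq 0$, with equality exactly when $\kh^0(\ka)$ is $0$-dimensional and $\kh^{-1}(\ka)$ is $\mu^B$-semistable of slope $\mu$. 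The contribution of such a $\kh^0(\ka)$ to $\mathrm{Re}\,Z_m$ is $-\mathrm{length}(\kh^0(\ka))\leq 0$, so (i) reduces to the claim that $\mathrm{Re}\,Z_m(F)>0$ for every $\mu^B$-semistable $F\in\cat{F}$ with $\mu^B(F)=\mu$.

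Passing to the Jordan--H\"older filtration of such an $F$ and using additivity of $Z_m$, the inequality further reduces to the case of $\mu^B$-stable $F$. For stable twisted sheaves one has the sharp Bogomolov inequality $v^B(F)^2\geq -2$, equivalently $s\leq (c^2+2)/(2r)$. Substituting this estimate and completing the square in $c-rD$ yields
\begin{equation*}
\mathrm{Re}\,Z_m(F)\geq rm^2-\frac{1}{r}-\frac{(c-rD)^2}{2r}.
\end{equation*}
Since $(c-rD)\cdot f^*h=r\mu-r\mu=0$ at the borderline $\mu^B(F)=\mu$, the Hodge index theorem gives $(c-rD)^2\leq 0$, so the last term is non-negative. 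The essential input of hypothesis $(\ast\ast)$ enters here: because $\beta$ is non-trivial, no rank-$1$ $\beta$-twisted sheaf exists, so $r\geq 2$ for every stable factor, and consequently $rm^2-1/r>0$ as soon as $m>1/2$. Choosing $\epsilon$ with $1/2<\epsilon<m_0$ (possible since $m_0=\sqrt{5}/4>1/2$) then settles (i) for every $m>\epsilon$.

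Properties (ii) and (iii) follow from the standard arguments in \cite[Sect.\ 7]{Br1}, adapted to the twisted setting in \cite{HMS}: the image of $Z_m$ on the algebraic Mukai lattice $\Pic(S,B)$ is discrete, from which local finiteness is immediate, and the Noetherian property of the tilted heart $\cat{A}$ (verified as in \cite[Prop.\ 7.1]{Br1}) yields existence of Harder--Narasimhan filtrations. I expect the main obstacle to be the borderline case in (i): without both the sharper Bogomolov inequality $v^2\geq -2$ valid for stable (rather than merely $\mu^B$-semistable) twisted sheaves and the rank-$\geq 2$ constraint supplied by $(\ast\ast)$, the argument would instead force $m>1$, incompatibly with the bound $\epsilon<m_0<1$ of the statement.
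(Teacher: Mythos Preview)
Your proof is correct and follows essentially the same route as the paper's. Both reduce the stability-function property to checking $\mathrm{Re}\,Z_m>0$ on $\mu^B$-stable sheaves of slope $\mu$, use the Bogomolov-type bound $v^B(\ka)^2\geq -2$ (the paper phrases this as $\chi(\ka,\ka)\leq 2$), apply the Hodge Index theorem to the class orthogonal to $f^*h$, and then use the non-triviality of $\beta$ (via Lemma~\ref{lem:NS}) to force $r\geq 2$, which gives positivity for $m>1/2$; both defer Harder--Narasimhan filtrations and local finiteness to \cite{Br1} and \cite{HMS}. The only cosmetic difference is that the paper separates the spherical case ($\chi=2$) from the non-spherical one before invoking $r\geq 2$, whereas you handle them uniformly via the single inequality $\mathrm{Re}\,Z_m\geq rm^2-1/r-(c-rD)^2/(2r)$.
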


\begin{proof}
Let $\ka\in\coh(S,\beta)$ be a torsion-free $\mu^B$-stable sheaf with $\mu^B(\ka)=\mu$, so that $$\mathrm{Im}(Z_m(v^B(\ka)))=m\left(c-r\left(\frac{\ell}{2}+\frac{1}{4}f^*h+B\right)\right)\cdot f^*h=0,$$ where $v^B(\ka)=(r,c,d)$.

To prove that the pair $\sigma_m$ defines a stability condition, we have now to analyze more in detail the real part of $Z_m(v^B(\ka))$. As $r>0$, by Lemma \ref{lem:NS}, we have $F:=\left(c-r\left(\frac{1}{2}\ell+\frac{1}{4}f^*h+B\right)\right)\in\Pic(S)\otimes\QQ$. Thus, by the Hodge Index theorem, $F\cdot F\leq 0$. Since $\ka$ is $\mu^B$-stable, we have $\chi(\ka,\ka)\leq 2$. By definition,
\[
\mathrm{Re}(Z_m(v^B(\ka)))=\frac{1}{2r}(-\chi(\ka,\ka)+2r^2m^2-F\cdot F).
\]
If $\ka$ is not spherical (which implies, since $\chi$ is even, that
$\chi(\ka,\ka)\leq0$), then $\mathrm{Re}(Z_m(v^B(\ka)))>0$ and so
$Z_m(v^B(\ka[1]))\in\HH$. If $\ka$ is spherical, then
$-2+2r^2m^2-F\cdot F>0$, because, by Corollary \ref{cor:rankdiv},
$r\geq 2$, and by assumption $m>1/2$.

As in \cite[Lemma 6.2]{Br1}, this suffices to show that
$Z_m(\ka)\in\HH$, for any $\ka\in\cat{A}$ and $m>1/2$.
The existence of the Harder--Narasimhan filtrations (which is what we need to conclude that the pair $(Z_m,\cat{A})$ is a locally-finite stability condition) follows now from the same argument as in \cite[Prop.\ 7.1 and Sect.\ 11]{Br1} adapted to the twisted setting according to \cite[Sect.\ 3.1]{HMS}.
\end{proof}

We leave it to the reader to verify that, omitting $(\ast\ast)$, the previous result does not hold anymore.

\subsection{Proof of Theorem \ref{thm:main1}}\label{subsec:proofthm2}

We keep the same assumptions as in the previous section.

\begin{lem}\label{lem:stabB01}
	The objects $\e[1]$ and $\f$ are $\sigma_m$-stable for any $m>1/2$.
\end{lem}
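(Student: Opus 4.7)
The strategy runs parallel for $\f$ and $\e[1]$: unpack a would-be destabilizing sub in $\cat{A}$ via the long exact sequence of sheaf cohomology (LES below), and combine (a) the $\mu^B$-stability of $\e,\f$ (Lemma \ref{prop:stab}), (b) the absence of rank-$1$ $\beta$-twisted sheaves, which follows from Lemma \ref{lem:NS} since $\beta$ has order $2$ (so twisted ranks are always even), and (c) the local freeness of $\e$. As a preliminary, using Remark \ref{rmk:realimstab} together with the sphericalness of $\ke_j$ (which forces $v^B(\ke_j)^2=-2$, so the degree-$4$ component is determined by the rest), one finds
\[
Z_m(v^B(\f))=\bigl(2m^2-\tfrac{5}{8}\bigr)+im, \qquad Z_m(v^B(\e[1]))=\bigl(\tfrac{5}{8}-2m^2\bigr)+im,
\]
so $\Im Z_m(\f)=\Im Z_m(\e[1])=m$.

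For $\f$: Let $0\to\kg\to\f\to\kq\to 0$ be a non-trivial proper sequence in $\cat{A}$. Since $\kh^{-1}(\f)=0$, the LES gives $\kh^{-1}(\kg)=0$, so $\kg$ is a sheaf lying in $\cat{T}$; rank parity and torsion-freeness of $\f$ then force $\kg$ to have rank $2$. The continuation of the LES reads $0\to\kh^{-1}(\kq)\to\kg\to\f\to\kh^0(\kq)\to 0$, and rank parity applied to the torsion-free sub $\kh^{-1}(\kq)\subseteq\kg$ gives rank $0$ or $2$; the rank-$2$ case collapses via the torsion-free cokernel to $\kg=\kh^{-1}(\kq)\in\cat{F}\cap\cat{T}=0$, a contradiction. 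Hence $\kh^{-1}(\kq)=0$, $\kg\subseteq\f$ as sheaves, and $\f/\kg$ is torsion. The $\mu^B$-stability of $\f$, $\kg\in\cat{T}$ and the integrality of $c_1^B\cdot f^*h$ (Lemma \ref{lem:NS} with $B\cdot f^*h=1/2$) pin $\mu^B(\kg)=\mu^B(\f)$, and since $f^*h$ is ample $\f/\kg$ must be zero-dimensional of some length $\ell\geq 1$. A direct Mukai-pairing computation yields $Z_m(\f/\kg)=-\ell$, so $\Re Z_m(\kg)=\Re Z_m(\f)+\ell$ with equal imaginary parts, giving $\phi_m(\kg)<\phi_m(\f)$.

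For $\e[1]$: Let $0\to\kg\to\e[1]\to\kq\to 0$ be non-trivial proper in $\cat{A}$. The LES reads $0\to\kh^{-1}(\kg)\to\e\to\kh^{-1}(\kq)\to\kh^0(\kg)\to 0$ with $\kh^0(\kq)=0$, and rank parity applied to $\kh^{-1}(\kg)\subseteq\e$ leaves rank $0$ or $2$; the rank-$2$ case forces $\kh^{-1}(\kg)=\e$ (the cokernel, being torsion, embeds in the torsion-free $\kh^{-1}(\kq)$) and then $\kh^{-1}(\kq)=\kh^0(\kg)\in\cat{F}\cap\cat{T}=0$, giving $\kg=\e[1]$, excluded. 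So $\kh^{-1}(\kg)=0$, $\kg\in\cat{T}$ is a sheaf, and $\kn:=\kh^{-1}(\kq)\in\cat{F}$ fits into $0\to\e\to\kn\to\kg\to 0$. The condition $\kn\in\cat{F}$ translates to $\Im Z_m(\kg)\leq m$, and by Lemma \ref{lem:NS} the quantity $\Im Z_m(\kg)/m$ is a non-negative integer; it vanishes exactly when $\kg$ is zero-dimensional torsion, but in that case $\Ext^1(\kg,\e)=0$ (as $\e$ is locally free), so the extension would split and $\kg$ would appear as a torsion summand of $\kn$, contradicting $\kn\in\cat{F}$. Applied to any zero-dimensional torsion subsheaf of $\kg$, the same vanishing forces $\kg$ to be either torsion-free or pure $1$-dimensional torsion, so $\Im Z_m(\kg)=m$ and $\Im Z_m(\kn)=0$. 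Since $\kn\neq 0$ lies in $\cat{F}$, $-Z_m(\kn)\in\HH$ is forced onto the strictly negative real axis, so $\Re Z_m(\kn)>0$; combined with $\Re Z_m(\e)=2m^2-\tfrac{5}{8}$ this yields $\Re Z_m(\kg)>\tfrac{5}{8}-2m^2=\Re Z_m(\e[1])$, and hence $\phi_m(\kg)<\phi_m(\e[1])$.

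The delicate step is the $\e[1]$ case: one must rule out zero-dimensional torsion in $\kg$ using the local freeness of $\e$, and use the integrality of $\Im Z_m/m$ (from Lemma \ref{lem:NS}) to pin $\Im Z_m(\kn)=0$ exactly. With that in hand, $\kn\in\cat{F}$ pushes $Z_m(\kn)$ to the positive real axis, which delivers the strict real-part inequality uniformly in $m\in I$.
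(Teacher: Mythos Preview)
Your argument for $\e[1]$ is correct and in fact supplies the details the paper leaves to the reader. The argument for $\f$, however, has a genuine gap: the assertion ``rank parity and torsion-freeness of $\f$ then force $\kg$ to have rank $2$'' is not justified. A subobject $\kg\hookrightarrow\f$ in the heart $\cat{A}$ is \emph{not} a subsheaf in general; the long exact sequence you write down exhibits $\kh^{-1}(\kq)$ as the kernel of the sheaf map $\kg\to\f$, and nothing yet prevents $\rk(\kh^{-1}(\kq))\geq 2$, so $\kg$ could a priori have rank $4,6,\ldots$. Your subsequent dichotomy ``$\kh^{-1}(\kq)$ has rank $0$ or $2$'' then presupposes the very rank bound you have not established, so the reasoning is circular at that point.

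The paper sidesteps this entirely by arguing directly with $\Im Z_m$, exactly as you do for $\e[1]$: since $\Im Z_m(\f)=m$ and $\frac{1}{m}\Im Z_m$ takes non-negative integer values on $\cat{A}$, one has $\Im Z_m(\kg)\in\{0,m\}$. If $\Im Z_m(\kg)=0$ then $\kg$, being a sheaf in $\cat{T}$, is supported on points; since $\kh^{-1}(\kq)\in\cat{F}$ is torsion-free and embeds in $\kg$, it vanishes, so $\kg\hookrightarrow\f$ as sheaves, impossible for $\f$ locally free. Hence $\Im Z_m(\kg)=m$, so $\Im Z_m(\kq)=0$, and $\kq\neq 0$ forces $\phi_m(\kq)=1>\phi_m(\f)$; thus $\kg$ never destabilizes. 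No rank analysis is needed, and the argument is uniform in $m$. Your $\f$ case can be repaired simply by replacing the rank step with this imaginary-part step, which you already have at hand.
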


\begin{proof}
Suppose $\f$ is not $\sigma_m$-stable, for some $m>1/2$.
Then there exists a short exact sequence in $\cat{A}$
\begin{equation}\label{eqn:sta1}
0\longrightarrow\ka\longrightarrow\f\longrightarrow\kb\longrightarrow 0,
\end{equation}
where $\ka\neq 0$ is $\sigma_m$-semistable and it has phase $\phi_m(\ka)\geq\phi_m(\f)$.
It is not hard to see that $\ka$ is a sheaf.
By  Remark \ref{rmk:realimstab}, $\mathrm{Im}(Z_m(v^B(\f)))=m$ and, by Corollary \ref{cor:rankdiv}, $\frac{1}{m}\mathrm{Im}(Z_m(v^B(-)))$ is an integral function which is additive on triangles.
Hence, $\mathrm{Im}(Z_m(v^B(\ka)))$ is either $0$ or $m$.
The former is impossible because $\f$ is locally free and $\ka$ would then be supported on points.
If the latter holds true, then $\mathrm{Im}(Z_m(v^B(\kb)))=0$, and so \eqref{eqn:sta1} does not destabilizes $\f$.

The same argument works for $\e[1]$, by using Lemma \ref{prop:stab}.
\end{proof}

Fix a (non-canonical) isomorphism $u:\Hom(\f,\e[2])\isomor\Hom(\e,\f)$.
For $l$ a line in $Y$, consider the following sheaves in $\coh(S,\beta)$
\[
\kk_l:=\left\{\begin{array}{ll}
\kj_l&\mbox{if } l\not\subseteq P\\
\mathrm{Coker}(s_l:\e\to\f) &\mbox{otherwise,}
\end{array}\right.
\]
where $s_l=u(\overline{s}_l)$ and $\overline{s}_l:\ke_1\to\ke_0[2]$ is the morphism associated to the line $l\subseteq P$, according to Proposition \ref{prop:ideallines}.
Notice that the choice of the isomorphism $u$ is of no importance for our construction.
Moreover, by Lemma \ref{prop:stab}, all morphisms $\e\to\f$ are injective.

\begin{lem}\label{lem:st0}
For any line $l\subseteq Y$, the sheaf $\kk_l$ is pure of dimension $1$.
\end{lem}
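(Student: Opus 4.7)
The plan is to split the statement according to the definition of $\kk_l$. For $l\not\subseteq P$, $\kk_l=\kj_l$ is by construction the preimage under the exact equivalence $\Xi$ of \eqref{eqn:equiv} of the pure torsion sheaf $\ki_l$ described in Proposition~\ref{prop:ideallines}. Any $0$-dimensional subsheaf of $\kj_l$ would be sent via $\Xi$ (which preserves support dimension, since $f$ is finite and tensoring with $\e\dual$ preserves support) to a $0$-dimensional subsheaf of $\ki_l$; since there is none, $\kj_l$ is pure of dimension $1$.

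For $l\subseteq P$ I would work directly from the defining sequence
\[
0\lto\e\stackrel{s_l}{\lto}\f\lto\kk_l\lto 0
\]
(with $s_l$ injective by Lemma~\ref{prop:stab}). Computing Mukai vectors by means of Lemma~\ref{lem:calc} yields $v^B(\kk_l)=(0,f^*h,s_1-s_0)$, so $\kk_l$ is a nonzero rank-$0$ sheaf with $c_1^B=f^*h\neq 0$; its support is therefore a nontrivial divisor in the linear system $|f^*h|$, cut out by the determinant of $s_l$, and in particular of pure dimension one.

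To conclude, I would rule out $0$-dimensional embedded points in $\kk_l$. Any $0$-dimensional $\beta$-twisted subsheaf admits a filtration by length-$1$ $\beta$-twisted skyscrapers $\ko_p$, $p\in S$, so it is enough to verify $\Hom(\ko_p,\kk_l)=0$ for every $p$. Applying $\Hom(\ko_p,-)$ to the defining sequence produces the exact piece
\[
\Hom(\ko_p,\f)\lto\Hom(\ko_p,\kk_l)\lto\Ext^1(\ko_p,\e),
\]
and both outer terms vanish by the standard Koszul computation on the smooth surface $S$, using only that $\e$ and $\f$ are locally free; the twist $\beta$ plays no role because $\Ext^i(\ko_p,-)$ is computed in a neighborhood of $p$ on which $\beta$ is trivializable.

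The point to notice is that the argument needs neither stability nor any particular feature of the morphism $s_l$---it rests solely on the local freeness of $\e$ and $\f$ established in Lemma~\ref{lem:calc}. A more naive attempt via $\mu^B$-stability of $\f$ applied to the preimage in $\f$ of a hypothetical $0$-dimensional subsheaf of $\kk_l$ is blocked by the fact that such a preimage has slope equal to $\mu^B(\e)$, strictly less than $\mu^B(\f)$, so no contradiction ensues; passing to $\Hom$ against $\beta$-twisted skyscrapers bypasses this entirely and is the natural move.
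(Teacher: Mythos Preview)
Your proof is correct and follows essentially the same route as the paper. For $l\not\subseteq P$ the paper simply refers back to the observation in Section~\ref{subsec:twistedK3} that $\kj_l$ is a vector bundle supported on a curve, which is your argument in compressed form; for $l\subseteq P$ the paper just says that $\kk_l$ ``has a locally free resolution of length $1$ and so it cannot have torsion supported on points'', and your $\Hom(\ko_p,-)$ computation is exactly the standard unpacking of that one-liner.
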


\begin{proof}
If $l\not\subseteq P$, this was already observed in Section \ref{subsec:twistedK3}.
If $l\subseteq P$, then $\kk_l$ has a locally free resolution of length $1$ and so it cannot have torsion supported on points.
\end{proof}

The following is the first step in the proof of Theorem \ref{thm:main1}.

\begin{lem}\label{lem:st1}
{\rm (i)} If $m>m_0=\frac{\sqrt{5}}{4}$, then $\kk_l$ is $\sigma_m$-stable, for any line $l\subseteq Y$.

{\rm (ii)} Assume $m=m_0$. If $l\not\subseteq P$ is a line in $Y$, then $\kk_l$ is $\sigma_m$-stable. If $l\subseteq P$, then $\kk_l$ is $\sigma_{m_0}$-semistable and
\begin{equation}\label{eqn:sta4}
\f\longrightarrow\kk_l\longrightarrow\e[1]
\end{equation}
is the Jordan--H\"older filtration of $\kk_l$.
\end{lem}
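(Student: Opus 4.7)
The plan is to combine the phase calculations implicit in Remark~\ref{rmk:realimstab} with a case analysis of potential destabilizing subobjects inside the torsion pair defining $\cat{A}$.

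First, I would show $\kk_l\in\cat{A}$ has phase $1/2$ for all $m\in I$. Lemma~\ref{lem:calc} gives $v^B(\kk_l)=(0,f^*h,s)$ when $l\not\subseteq P$, and the same formula follows by additivity of $v^B$ applied to $0\to\e\to\f\to\kk_l\to 0$ when $l\subseteq P$. Remark~\ref{rmk:realimstab} then yields $Z_m(v^B(\kk_l))=2im$, so $\phi_m(\kk_l)=1/2$. Lemma~\ref{lem:st0} states $\kk_l$ is pure of dimension one, whence $\kk_l\in\cat{T}\subseteq\cat{A}$.

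Next, for $l\subseteq P$, rotating the triangle $\e\to\f\to\kk_l\to\e[1]$ arising from the defining SES of sheaves and taking $\cat{A}$-cohomology (using $\e\in\cat{F}$, $\f\in\cat{T}$) produces an SES $0\to\f\to\kk_l\to\e[1]\to 0$ in $\cat{A}$. Additivity of $Z_m$ combined with Remark~\ref{rmk:realimstab} gives $Z_m(v^B(\f))=(2m^2-5/8)+im$ and $Z_m(v^B(\e[1]))=(5/8-2m^2)+im$, hence $\phi_{m_0}(\f)=\phi_{m_0}(\e[1])=1/2$ while $\phi_m(\f)<1/2<\phi_m(\e[1])$ for $m>m_0$. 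At $m=m_0$, Lemma~\ref{lem:stabB01} together with $v^B(\f)\neq v^B(\e[1])$ identifies the displayed sequence as the Jordan--H\"older filtration of $\kk_l$, which is therefore strictly $\sigma_{m_0}$-semistable; this settles the second half of (ii).

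For the stability assertions I would rule out a hypothetical destabilizing SES $0\to\ka\to\kk_l\to\kb\to 0$ in $\cat{A}$ with $\phi_m(\ka)\geq 1/2$. The long exact sequence of standard cohomology shows $\kh^{-1}(\kb)\hookrightarrow\kh^0(\ka)=\ka\in\cat{T}$; since $\kh^{-1}(\kb)\in\cat{F}$ is torsion-free with slopes $\leq\mu$ while any non-zero torsion-free subsheaf of an object of $\cat{T}$ must have slopes $>\mu$, one concludes $\kh^{-1}(\kb)=0$. Thus the sequence is exact in $\coh(S,\beta)$ and $\ka$ is a pure dim-$1$ subsheaf of $\kk_l$. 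Writing $v^B(\ka)=(0,D,d)$ with $D$ effective, the nonnegative integer $\mathrm{Im}(Z_m(\ka))/m=D\cdot f^*h$ lies in $\{0,1,2\}$: the value $2$ forces $\ka=\kk_l$; the value $0$ forces $D=0$ by Hodge-index and effectivity on $S$; the value $1$ can occur only if $f^*h$ decomposes in $\Pic(S)$, in which case a direct computation of $\mathrm{Re}(Z_m(\ka))$ parallel to that for $\f,\e[1]$ gives $\phi_m(\ka)<1/2$ for $m>m_0$. The main obstacle is the final subcase at $m=m_0$ with $l\not\subseteq P$ and $f^*h=C_1+C_2$ split: a putative phase-$1/2$ destabilizer would have class $(0,C_i,d)$ with $d$ prescribed by $\mathrm{Re}(Z_{m_0}(\ka))=0$, and excluding it requires identifying the geometric structure of $\kk_l$ as a twisted sheaf on the reducible curve $f^{-1}(h_l)$ coming from Proposition~\ref{prop:ideallines}, a finer input than the purely numerical analysis used elsewhere in the argument.
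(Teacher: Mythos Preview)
Your treatment of the $l\subseteq P$ case at $m=m_0$ is fine, but the core of the stability argument has a genuine gap. The claim ``any non-zero torsion-free subsheaf of an object of $\cat{T}$ must have slopes $>\mu$'' is false: $\cat{T}$ is closed under quotients and extensions in $\coh(S,\beta)$, not under subobjects. A concrete counterexample is already in front of you: $\e\hookrightarrow\f$ is an inclusion of torsion-free sheaves with $\f\in\cat{T}$ and $\e\in\cat{F}$. Consequently you cannot conclude $\kh^{-1}(\kb)=0$, and the destabilizing sequence need not be exact in $\coh(S,\beta)$. Indeed, the very Jordan--H\"older sequence $0\to\f\to\kk_l\to\e[1]\to 0$ you exhibit for $l\subseteq P$ has $\kh^{-1}(\kb)=\e\neq 0$; so the phenomenon you must analyze for general $l$ is precisely the one your argument excludes a priori. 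Once that step fails, the subsequent case analysis with $v^B(\ka)=(0,D,d)$ and $D$ an effective curve class is beside the point, and the ``main obstacle'' you flag (a putative dimension-$1$ subsheaf when $f^*h$ splits) is a red herring.

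The paper proceeds differently. One takes $\ka$ to be a $\sigma_m$-stable destabilizing subobject and writes $v^B(\ka)=(r,c,d)$ without assuming $r=0$. The integrality of $\frac{1}{m}\Im(Z_m)$ forces $\Im(Z_m(\ka))\in\{0,m,2m\}$; the extreme cases are discarded as in your sketch, leaving $\Im(Z_m(\ka))=m$. Here the hypothesis $(\ast\ast)$ enters decisively: by Lemma~\ref{lem:NS} the rank of any class in $\Pic(S,B)$ is even, and one checks $r\neq 0$, so $r\geq 2$. Stability of $\ka$ gives $\chi(\ka,\ka)\leq 2$, and with $F:=c-r(\tfrac{1}{2}K+\tfrac{1}{2}f^*h+B)\in\Pic(S)$ the Hodge Index theorem yields
\[
\mathrm{Re}(Z_m(\ka))=\frac{1}{2r}\bigl(-\chi(\ka,\ka)+2r^2m^2-F\cdot F\bigr)\geq \frac{1}{4}\bigl(-2+8m^2-\tfrac{1}{2}\bigr)=\mathrm{Re}(Z_m(\f)),
\]
which is strictly positive for $m>m_0$ and vanishes only at $m=m_0$ with $r=2$, $\chi(\ka,\ka)=2$, and equality throughout. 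Unwinding the equalities forces $v^B(\ka)=v^B(\f)$ and $v^B(\kb)=v^B(\e[1])$, whence $\ka\cong\f$ and $\kb\cong\e[1]$ by Lemma~\ref{lem:stabB01}; Proposition~\ref{prop:ideallines} then shows this occurs exactly when $l\subseteq P$. The point is that the actual destabilizers have rank $2$, not $0$, so your reduction to pure dimension-$1$ subsheaves misses them entirely.
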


\begin{proof}
Suppose that there is a line $l\subseteq Y$ and $m\geq m_0$ such that $\kk_l$ is not $\sigma_m$-stable. This means that there is a destabilizing short exact sequence in $\cat{A}$
\begin{equation}\label{eqn:sta2}
0\longrightarrow\ka\longrightarrow\kk_l\longrightarrow\kb\longrightarrow 0,
\end{equation}
with $\ka\neq 0$ and $\sigma_m$-stable (and so $\chi(\ka,\ka)\leq 2$).
Also in this case, this implies that $\ka\in\coh(S,\beta)$.
Set $v^B(\ka)=(r,c,d)$.

By Remark \ref{rmk:realimstab} we have $\mathrm{Im}(Z_m(v^B(\kj_l)))=2m$ and so $J:=\mathrm{Im}(Z_m(v^B(\ka)))\in\{0,m,2m\}$.
If $J=0$, then $\ka$ is a torsion sheaf supported on points, and this contradicts Lemma \ref{lem:st0}.
If $J=2m$, then $\kb$ has phase $1$ and \eqref{eqn:sta2} would not destabilize $\kk_l$.
Hence $J=m$.
Due to $(\ast\ast)$, since $f^*h$ is indecomposable, $r>0$.
By Corollary \ref{cor:rankdiv}, $r\geq 2$.

As $\ka$ destabilizes $\kk_l$, we have
\begin{equation}\label{eqn:re}
\mathrm{Re}(Z_m(v^B(\ka)))=\frac{1}{2r}(-\chi(\ka,\ka)+2r^2m^2-F\cdot F)\leq 0,
\end{equation}
where, by Lemma \ref{lem:NS}, $$F:=\left(c-r\left(\frac{1}{2}\ell+\frac{1}{4}f^*h+B\right)\right)\in\Pic(S)\otimes\QQ.$$
Since $J=m$, the Hodge Index theorem yields $F\cdot F\leq\frac{1}{2}$. In particular, by Remark \ref{rmk:realimstab},
\begin{equation}\label{eqn:re1}
\mathrm{Re}(Z_m(v^B(\ka)))\geq\frac{1}{4}\left(8m^2-\frac{5}{2}\right)=\mathrm{Re}(Z_m(v^B(\f)))\geq 0,
\end{equation}
for $m\geq m_0$. Therefore, \eqref{eqn:re} is not verified unless $m=m_0$. This proves (i).

If $m=m_0$, the only possibility for \eqref{eqn:sta2} to destabilize $\kk_l$ is that $r=2$, $\chi(\ka,\ka)=2$ and $\mathrm{Re}(Z_{m_0}(v^B(\ka)))=\mathrm{Re}(Z_{m_0}(v^B(\f)))=0$. Under these assumptions, $$E:=c_1^B(\ka)-c_1^B(\f)=c-\ell-f^*h-2B$$ is in $\Pic(S)$ (use again Lemmas \ref{lem:NS} and \ref{lem:calc}) and $F=E+\frac{1}{2}f^*h$. As $\mathrm{Im}(Z_{m_0}(v^B(\ka)))=\mathrm{Im}(Z_{m_0}(v^B(\f)))=m_0$, we have $E\cdot f^*h=0$. Then, either $E$ is trivial, or $E\cdot E<0$. The latter cannot be true because
$$\frac{1}{2}=F\cdot F=E\cdot E+\frac{1}{2}<\frac{1}{2}.$$

If $E$ is trivial, then the fact that $\chi(\ka,\ka)=\chi(\f,\f)=2$ implies that $v^B(\ka)=v^B(\f)$ and $v^B(\kb)=v^B(\e[1])$. Thus, by Lemma \ref{lem:stabB01},  $\ka\cong\f$ and $\kb\cong\e[1]$. Therefore, by Serre duality and Proposition \ref{prop:ideallines}, \eqref{eqn:sta2} destabilizes $\kk_l$ if and only if $l\subseteq P$. In this situation,
\[
\begin{array}{c}
\mathrm{Re}(Z_{m_0}(v^B(\e[1])))=\mathrm{Re}(Z_{m_0}(v^B(\f)))=0,\\
\mathrm{Im}(Z_{m_0}(v^B(\e[1])))=\mathrm{Im}(Z_{m_0}(v^B(\f)))=m_0.
\end{array}
\]
So, due to Lemma \ref{lem:stabB01}, \eqref{eqn:sta4} is the Jordan--H\"older filtration of $\kk_l$.
\end{proof}

By the discussion in \cite[Sect.\ 9]{Br1} and by Lemma \ref{lem:st1}
(ii), for $\epsilon$ sufficiently close to $m_0$, with $1/2<\epsilon<m_0$, and for a line $l\not\subseteq P$, we have that $\kj_l(=\kk_l)$ is $\sigma_m$-stable, for all $m>\epsilon$ (use that $v^B(\kj_l)$ is primitive).

Let now $l\subseteq Y$ be a line contained in $P$. By Lemma
\ref{lem:stabB01}, the
triangle $$\e[1]\longrightarrow\kj_l\longrightarrow\f$$ is the
Harder--Narasimhan filtration of $\kj_l$ for $m>m_0$. This means that
$\kj_l$ is $\sigma_{m_0}$-semistable with Jordan--H\"older factors
$\e[1]$ and $\f$. As a consequence, up to choosing $\epsilon$ closer
to $m_0$, $\kj_l$ is $\sigma_m$-stable, for all  $m\in(\epsilon,m_0)$. Indeed, if not, by \cite[Prop.\ 9.3]{Br1}, the Harder--Narasimhan factors of $\kj_l$ in the stability condition $\sigma_m$, for $m\in(\epsilon,m_0)$, would survive in the stability condition $\sigma_{m_0}$. This would contradict the $\sigma_{m_0}$-semistability of $\kj_l$.

\smallskip

To finish the proof of Theorem \ref{thm:main1}, first of all we observe that all the arguments in \cite{Toda} generalize to the twisted setting.
In particular, for all $m>\epsilon$, it makes sense to speak about the moduli space $M^{\sigma_m}(S,v,B)$ of $\sigma_m$-stable objects in $\cat{A}$ with Mukai vector $v:=v^B(\kj_l)$ as algebraic space over $\CC$. Moreover, by the results in \cite[Sect.\ 3]{I}, $M^{\sigma_m}(S,v,B)$ is smooth symplectic of dimension $4$.
Hence, the only thing we need to prove is that, for $m\in(\epsilon,m_0)$, the objects $\kj_l$ are the only $\sigma_m$-semistable objects in $\cat{A}$ with Mukai vector $v$.

Let $\kg\in\cat{A}$ be a $\sigma_m$-semistable object, for some $m\in(\epsilon,m_0)$, with $v(\kg)=v$.
By \cite[Prop.\ 9.3]{Br1}, up to replacing $\epsilon$, we can assume that all such objects $\kg$ are $\sigma_{m_0}$-semistable.

\begin{lem}\label{lem:aux}
Let $m_1\geq m_0$ and let $\kg$ be a $\sigma_{m_1}$-stable object with Mukai vector $v$.
Then $\kg$ is $\sigma_m$-stable, for all $m\geq m_1$.
\end{lem}

\begin{proof}
We use a similar argument as in the proof of Lemma \ref{lem:st1}.
Assume, for a contradiction, that $\kg$ is properly semistable at $m> m_1$ (i.e., it is not stable).
Then we have an exact sequence in $\cat{A}$
\[
0\longrightarrow\ka\longrightarrow\kg\longrightarrow\kb\longrightarrow 0,
\]
where $\ka\neq0$ is $\sigma_{m}$-stable and $\mathrm{Re}(Z_m(v^B(\ka)))=0$.
Let $v^B(\ka)=(r,c,d)$ be the Mukai vector of $\ka$.
First of all, we observe that $r\neq 0$.
Indeed, if $r=0$, then
$\mathrm{Re}(Z_{m_1}(v^B(\ka)))=\mathrm{Re}(Z_m(v^B(\ka)))=0$,
which would contradict the stability of $\kg$ at $m_1$.

As before, if we let $J:=\mathrm{Im}(Z_m(v^B(\ka)))$, then $J\in\{0,m,2m\}$.
The case $J=2m$ does not destabilize.
Since $\kg$ is $\sigma_{m_1}$-stable, then the case $J=0$ is not possible.
Hence, we are left with $J=m$.
But then, the same argument as in Lemma \ref{lem:st1}, given that $r\neq 0$ and $m>m_1$, shows that $\mathrm{Re}(Z_m(v^B(\ka)))\neq0$, which is again a contradiction.
\end{proof}

Proceeding with the proof of Theorem \ref{thm:main1}, we have two possibilities for $\kg$.
Either $\kg$ is $\sigma_{m_0}$-stable, or it is properly $\sigma_{m_0}$-semistable.
If $\kg$ is $\sigma_{m_0}$-stable, then, by Lemma \ref{lem:aux}, $\kg$ is $\sigma_m$-stable, for all $m\geq m_0$.

\begin{lem}\label{lem:aux2}
Let $\kg\in\cat{A}$ be a $\sigma_m$-stable object, for all $m\geq m_0$, with Mukai vector $v$.
Then $\kg$ is a $\beta$-twisted stable sheaf, pure of dimension $1$.
\end{lem}

\begin{proof}
The proof is very similar to \cite[Prop.\ 14.2]{Br1}.
Assume first that $\kg$ is not a sheaf, namely $\kh^{-1}(\kg)\neq0$.
Since $r(\kg)=0$, then $\kh^0(\kg)\neq0$.
Also, by definition, $\mathrm{Re}(Z_m(v^B(\kh^{-1}(\kg))))<0$, for $m\gg0$.
Hence, the exact sequence in $\cat{A}$
\[
0\longrightarrow\kh^{-1}(\kg)[1]\longrightarrow\kg\longrightarrow\kh^{0}(\kg)\longrightarrow0
\]
would destabilize $\kg$, when $m\gg0$, a contradiction.

We deduce that $\kg\cong\kh^0(\kg)$ is a sheaf.
Assume then that $\kg$ is not pure of dimension $1$.
Then its torsion part $\kg_0$ of dimension $0$ would have $Z_m(\kg_0)\in\RR_{<0}$, and would destabilize $\kg$, a contradiction.
Hence, $\kg$ is a pure $\beta$-twisted sheaf of dimension $1$, and $c_1^B(\kg)=f^*h$.
Since $f^*h$ is indecomposable, $\kg$ is stable, as we wanted.
\end{proof}

Thus, if $\kg$ is $\sigma_{m_0}$-stable, then $\kg$ is an element of $M(S,v,B)$, by Lemma \ref{lem:aux2}.
By Proposition \ref{prop:bir1}, since the moduli space $M(S,v,B)$ is irreducible, $\kg$ is then isomorphic to $\kk_l$, for some $l\subseteq Y$.
Since $\kg$ is $\sigma_{m_0}$-stable, we deduce that $\kg\cong\kj_l$.

Assume now that $\kg$ is properly $\sigma_{m_0}$-semistable.
Since $\Im(Z_{m_0}(\kg))=2m_0$, $\kg$ must have two $\sigma_{m_0}$-stable factors $\ka_0$ and $\ka_1$ with
\[
0\longrightarrow\ka_0\longrightarrow\kg\longrightarrow\ka_1\longrightarrow 0
\]
an exact sequence in $\cat{A}$ and $\Im(Z_{m_0}(\ka_0))=\Im(Z_{m_0}(\ka_1))=m_0$.
By the same argument as above, we have an extension
\[
0\longrightarrow\ka_1\longrightarrow\widetilde{\kg}\longrightarrow\ka_0\longrightarrow 0
\]
which is $\sigma_{m_1}$-stable, for some $m_1>m_0$.
By Lemma \ref{lem:aux}, $\widetilde{\kg}$ is isomorphic to $\kk_l$, for some $l\subseteq Y$ and is properly $\sigma_{m_0}$-semistable.
By Lemma \ref{lem:st1}, $\widetilde{\kg}$ is then isomorphic to the cokernel of a map $\e\to\f$.
In particular, $\ka_0\cong\e[1]$, $\ka_1\cong\f$ and so $\kg$ is isomorphic to $\kj_l$, for some $l\subseteq Y$, as wanted.
This completes the proof of Theorem \ref{thm:main1}.

\begin{remark}\label{rmk:MukaiFlopUtah}
(i) As in \cite{AB}, we can be more precise about the relation between $M^{\sigma_m}(S,v,B)\cong F(Y)$, for $m\in(\epsilon,m_0)$, and $M(S,v,B)$.
Indeed, we can show that $F(Y)$ \emph{is} the Mukai flop of $M(S,v,B)$ in the plane $\PP(\Hom(\e,\f))$.
More precisely, if $\widetilde{M}$ denotes the Mukai flop of $M(S,v,B)$ in the plane $\PP(\Hom(\e,\f))$, then, by mimicking \cite[Sect.\ 5]{AB}, we can construct a universal family $\widetilde{U}$ on $\widetilde{M}$ so that the pair $(\widetilde{M},\widetilde{U})$ represents the functor
parametrizing $\sigma_m$-stable objects in $\cat{A}$ with Mukai vector $v$.
Hence, by Theorem \ref{thm:main1}, $\widetilde{M}$ is isomorphic to $M^{\sigma_m}(S,v,B)$ and so to $F(Y)$.

(ii) Questions related to the geometry of the birational models of $F(Y)$ are addressed in \cite{HVV}. See also \cite{HT} for the study of the birational automorphisms of the Fano varieties of lines of some special cubic fourfolds.
\end{remark}


\bigskip

{\small\noindent {\bf Acknowledgements.} It is a great pleasure to thank Aaron Bertram, Bert van Geemen, and Daniel Huybrechts for very interesting and useful discussions and suggestions. The authors are grateful to Brendan Hassett, Alexander Kuznetsov and Yuri Tschinkel for comments on an early version of this article. They also thank one of the referees for finding a mistake in a previous version of the paper, and for many insightful comments and suggestions which greatly improved the exposition of the paper. This paper was concluded during the authors' stay at the Mathematische Institut of the University of Bonn whose warm hospitality is gratefully acknowledged. During his visit to Bonn, P.~S.~ was supported by the grant SFB/TR 45. E.~M.~ is partially supported by NSF grant DMS-1001482 and DMS-1160466, the Hausdorff Center for Mathematics, Bonn, and SFB/TR 45.}


\end{document}